\documentclass[12pt]{article}
\usepackage{amsmath,amsfonts,amsthm,amssymb,mathrsfs,mathtools}
\usepackage{graphicx}
\usepackage[usenames,dvipsnames]{color}
\usepackage{float}
\usepackage{graphicx}
\usepackage{subfigure}
\usepackage{caption}
\usepackage{blindtext}
\usepackage{cite}
\usepackage{url}
\usepackage{caption}
\usepackage{epstopdf}
\usepackage{hyperref}

\usepackage{color}
\usepackage[latin1]{inputenc}

\newcommand{\dint}{\displaystyle\int}

\theoremstyle{plain}
\newtheorem{theorem}{Theorem}[section]
\newtheorem{hy}{Assumption}[section]
\newtheorem{corollary}[theorem]{Corollary}
\newtheorem{lemma}[theorem]{Lemma}
\newtheorem{proposition}[theorem]{Proposition}

\theoremstyle{definition}
\newtheorem{definition}[theorem]{Definition}

\theoremstyle{remark}
\newtheorem{remark}[theorem]{Remark}

\numberwithin{equation}{section}
\numberwithin{theorem}{section}

\usepackage{a4wide}
\usepackage{geometry}
\geometry{ hmargin=2cm, vmargin=2.5cm }
\begin{document}
\renewcommand{\thefootnote}{\fnsymbol{footnote}}

		\begin{center}
			{\Large \textbf{Lévy bridges with random length }} \\[0pt]
			~\\[0pt] \textbf{Mohamed Erraoui} \footnote[1]{Mathematics Department, Faculty of Sciences Semalalia, Cadi Ayyad University, Boulevard Prince Moulay Abdellah,	P. O. Box 2390, Marrakesh 40000, Morocco. 
				E-mail: \texttt{erraoui@uca.ac.ma}} \textbf{Astrid Hilbert} \footnote[2]{Linnaeus University, Vejdesplats 7, SE-351 95 V\"axj\"o, Sweden.
				 E-mail: \texttt{astrid.hilbert@lnu.se}} and \textbf{Mohammed Louriki} \footnote[7]{Mathematics Department, Faculty of Sciences Semalalia, Cadi Ayyad University, Boulevard Prince Moulay Abdellah,	P. O. Box 2390, Marrakesh 40000, Morocco. 
				 E-mail: \texttt{louriki.1992@gmail.com}}
			\\[0pt]
			\textit{Cadi Ayyad University and Linnaeus University  }\\[0pt]
			~\\[0pt]
		\end{center}
\begin{abstract}
	In this paper our first goal is to give precise definition of the L\'evy bridges with random length.  Our second task is to establish the Markov property of this process with respect to its completed natural filtration and thus with respect to the usual augmentation of this one. This property will be crucial for the right-continuity of completed natural filtration.
\end{abstract}

\textbf{Keywords:} L\'evy processes, L\'evy bridges, Markov Process, Bayes Theorem, Transition probability density.
\\ 
\\
\textbf{MSC:} 60G40, 60G51, 60G52, 60J25, 60E07, 60E10, 60F99.

\begin{center}
\section{Introduction}
\label{Setion_1}
\end{center}

%%%%%%%%%%%%%%%%%%%%%%%%%%%%%%%%%%%%%%%%%%%
A bridge with deterministic length is a stochastic process obtained by conditioning a known process to start from an initial point at time $0$ and to arrive to some fixed point $z$ at a fixed future time $r>0$. For example Brownian, Gamma, Gaussian, L\'evy and Markov bridges, see \cite{EY}, \cite{FG}, \cite{FPY}, \cite{GSV} and \cite{HHM}. It should be mentioned that Brownian, Gamma and Gaussian bridges have simple explicit pathwise constructions whereas for general Lévy and Markov bridges special considerations are needed. This stems from the specific characteristics of their laws. We refer to the paper \cite{FPY} for a general discussion on the construction of bridges for Markov process.

Recently, using pathwise representations of bridges with deterministic length, Bedini et al. \cite{BBE} and Erraoui et al. \cite{EL} and \cite{EHL} introduced the Brownian, Gaussian and Gamma bridges with random length using randomization approach. That is to substitute the deterministic time $r$ with  the values of a random time $\tau$.
A natural question arise: How to define the Lévy bridge with random length $\tau$? 

 In this paper we give the definition of the Lévy bridge with a random length $\tau$. 
 Our approach to do is as follows: conditionally on the event $\tau=r$, the law of the Lévy bridge with a random length $\tau$ is none other than that of the Lévy bridge with deterministic length $r$.  This will allow us to construct the law of Lévy bridge with a random length $\tau$ by integrating the above conditional law with respect to the law of $\tau$.
 
For this, following the approach given in \cite{FPY} for the construction of a deterministic length bridge $r$, we will need the transition probability densities of the Lévy process. Moreover, for reasons of integrability, we should know the asymptotic behavior of these probability densities. This led us to suppose the integrability condition of the characteristic function, introduced by Sharpe in \cite{Sh} and used in several works, see for example \cite{CUB}. It should be noted that under Sharpe's condition we have both the existence and the asymptotic behavior of the probability densities, see \cite{KS}.

Once the construction is done, our investigation focuses on the Markov property with respect to both filtrations, namely the completed natural filtration of the Lévy bridge and it's usual augmentation. The latter is the smallest filtration containing the natural filtration and satisfying the usual hypotheses of right-continuity and completeness. In order to accomplish the goal, we shall impose the condition that, away from the origin, the densities remain uniformly bounded as $t$ goes to $0$. We recall that this kind of condition was proved in several works, see \cite{Ish} and \cite{Lea87} and recently used in \cite{FH} and \cite{Kol} to derive a small-time polynomial expansion for the transition density of a Lévy process. As an important consequence of the Markov property we derive the right continuity of the the completed natural filtration, which means that both filtrations coincide.

In Section 2, we present a brief introduction to L\'evy processes, a note on the existence of transition probability densities for L\'evy processes, and some useful properties of the L\'evy bridge with deterministic length $r$, which will be used throughout the paper. In section 3, we first define the bridge with random length $\tau$ and we consider the stopping time property of $\tau$ with respect to the completed natural filtration of the former. We also clarify that the bridge with random length inherits the Markov property from its associated bridge with deterministic length. In the last section we will give some examples, more precisely classes of Lévy processes whose characteristic exponent behaves as in the symmetric stable Lévy case.

The following notations will be used throughout the paper: 
For a complete probability space $(\Omega,\mathcal{F},\mathbb{P})$, $\mathcal{N}_p$ denotes the
collection of $\mathbb{P}$-null sets. If $\theta$ (resp. $X$) is a random variable (resp. stochastic process), then $\mathbb{P}_{\theta}$
(resp. $\mathbb{P}_{X}$) denotes the law of $\theta$ (resp. of $X$) under $\mathbb{P}$. If $E$ is a topological space, then the Borel $\sigma$-algebra over $E$ will be denoted by $\mathcal{B}(E)$. The characteristic function of a set $A$ is written $\mathbb{I}_{A}$. 
The symmetric difference of two sets $A$ and $B$ is denoted by $A\Delta B$. The Skorohod space $\mathbb{D}_{\infty}$ of right continuous functions with left limits (càdlàg) from $[0,+\infty[$
to $\mathbb{R}$.
Finally for any process $Y=(Y_t,\, t\geq 0)$ on $(\Omega,\mathcal{F},\mathbb{P})$, we define by:
\begin{enumerate}
	\item[(i)] $\mathbb{F}^{Y}=\bigg(\mathcal{F}^{Y}_t:=\sigma(Y_s, s\leq t),~ t\geq 0\bigg)$ the natural filtration of the process $Y$.
	\item[(ii)] $\mathbb{F}^{Y,c}=\bigg(\mathcal{F}^{Y,c}_t:=\mathcal{F}^{Y}_t\vee \mathcal{N}_{P},\, t\geq 0\bigg)$ the completed natural filtration of the process $Y$.
	\item[(iii)] $\mathbb{F}^{Y,c}_{+}=\bigg(\mathcal{F}^{Y,c}_{t^{+}}:=\underset{{s>t}}\bigcap\mathcal{F}^{Y,c}_{s}=\mathcal{F}^{Y}_{t^{+}}\vee \mathcal{N}_{P},\, t\geq 0\bigg)$ the smallest filtration containing $\mathbb{F}^{Y}$ and satisfying the usual hypotheses of right-continuity and completeness.
\end{enumerate}

\begin{center}
	\section{L\'evy bridges with deterministic length}
\end{center}
This part summarizes a few well-known results about one-dimensional
L\'evy processes and the existence of its probability densities. Further details can be found in Bertoin
\cite{B} and Sato \cite{Sato}. 

\subsection{Absolute continuity of Lévy processes}
A real-valued stochastic process $X=\left\lbrace X_t, t\geq 0\right\rbrace$ defined on $\left(\Omega,\mathcal{F},\mathbb{P}\right)$ is said to be a Lévy process if it possesses the following properties: 

(i) The paths of $X$ are $\mathbb{P}$-almost surely right continuous with left limits. 

(ii) $\mathbb{P}(X_{0}=0)=1$. 

(iii) For $0\leq s\leq t$, $X_{t}-X_{s}$ is equal in distribution to $X_{t-s}$. 

(iv) For $0\leq s\leq t$, $X_{t}-X_{s}$ is independent of $\left\{ X_{u}:u\leq s\right\} $.

\vspace{0,2cm}

The law of $X_t$ is specified via its characteristic function given by
$$ \mathbb{E}[\exp(i u X_t)]=\exp(t\psi(u)), \,\,u \in \mathbb{R}, t\geq 0. $$
The function $\psi$ is known as the characteristic exponent of the process $X$.  The explicit
form of the characteristic exponent is given through the well-known L\'evy-Khintchine formula:
\begin{equation}
\psi(u)=iu d-\dfrac{u^2 b}{2}+\int_{\mathbb{R}}\left(\exp(iu x)-1-iu x \mathbb{I}_{\{\vert x\vert<1\}}\right)\nu(dx), \,\,\label{eqLevykinchen}
\end{equation} 
where $d \in \mathbb{R},$ $b \in \mathbb{R}_+$ and $\nu$ is a measure concentrated on $\mathbb{R}^{*}$, called the L\'evy measure, satisfying $$\int_{\mathbb{R}}(x^2\wedge 1)\nu(dx) < \infty.$$
The Kolmogorov-Daniell theorem allows us to see that the finite-dimensional
distributions of $X$ induce a probability measure $\mathbb{P}_X$ on the Skorohod space $\mathbb{D}_{\infty}$. On the other hand, it is well-known that any Lévy process can be realized
as the coordinate process on the Skorohod space $\mathbb{D}_{\infty}$ as follows:

 $Z_t(w)=w(t)$ for $w\in\mathbb{D}_{\infty} $ the canonical
process of the coordinates, $\mathcal{G}:=\sigma\left(Z_{s},s\geq0\right)$
with $\mathcal{G}_{t}:=\sigma\left(Z_{u},0\leq u\leq t\right),\, t\geq 0$, for
the canonical filtration generated by $Z$, and $\mathbb{P}_X$ is the probability measure on $\left(D_{\infty},\mathcal{G} \right)$. Then $Z$ is a Lévy process with the same distribution as $X$ on $\left(\Omega,\mathcal{F},\mathbb{P}\right)$. So we will regard each Lévy process as a probability measure on the
Skorokhod space $\mathbb{D}_{\infty}$ and vice versa. 

Many papers are devoted to the sufficient conditions under
which the probability law $\mathbb{P}_{X_{t}}(dx)$ is absolutely continuous with respect to Lebesgue measure, see Sato \cite{Sato}, Sharpe \cite{Sh}, Tucker \cite{T}, and Hartman and Wintner \cite{HW}. 
From now on, we suppose that the process $X$ is a symmetric Lévy process satisfying Sharpe's condition: 
\begin{hy}\label{hyintegr}
		 $\exp(t\psi)$ is integrable for any $t>0$.
	\end{hy}  
We note that the characteristic exponent $\psi$ is real-valued since $X$ is symmetric. Under Assumption \ref{hyintegr} for each $t>0$ the law 
$\mathbb{\ensuremath{P}}_{X_{t}}$ is known to be absolutely continuous with a bounded jointly continuous density $f_{t}(.)$ on $(0,\infty)\times\mathbb{R}$ given by 
\begin{equation}
			f_t(x)= (2\pi)^{-1}\dint_{\mathbb{R}}\exp(ixy)\exp(t\psi(y))\,dy, \,\, x\in \mathbb{R}.\label{eqfrepresentation}
		\end{equation}
		 Furthermore, by independence and homogeneity of the increments, the transition probabilities of $X$ can be taken
\begin{equation}
\mathbb{P}_{t}(x,dy)=f_t(y-x)dy.\label{eqtransitiondensity}
\end{equation}
Therefore the Chapman-Kolmogorov identity
\begin{equation}
f_{s+t}(x)=\int_{-\infty}^{+\infty}f_{t}(x-y)f_s(y)dy\label{eqChapmanKolmogorov}
\end{equation}
holds for every $s, t>0$ and $x, y\in \mathbb{R}$. In this case, given $x_0=t_0=0$, the finite-dimensional distributions are given by
\begin{equation}
\mathbb{P}(X_{t_1}\in dx_1,\ldots,X_{t_n}\in dx_n )=\prod_{i=1}^{n} f_{t_i-t_{i-1}}(x_i-x_{i-1})\,dx_i,
\end{equation}
for every $n\in \mathbb{N}$, $0<t_1<t_2<...<t_n$, and $(x_1,x_2,...,x_n) \in \mathbb{R}^n$.

Unfortunately, neither the density function $f_t$ nor its distribution function $\mathbb{P} (X_t \leq y)$ are explicitly given in many cases and therefore it is natural that the asymptotic behaviour of $f_t$ is given in terms of the characteristic exponent $\psi$. This result is established first in \cite{Sc} for one dimension and in \cite{KS} for the $n$-dimensional case. Before the statement we need some notations. For each $x\in \mathbb{R}$, we shall denote by $\mathbb{P}_x$ (resp., $\mathbb{E}_x$), the measure corresponding to the process $\left\lbrace x+ X_t, t\geq 0\right\rbrace$ under $\mathbb{P}$ (resp., the expectation under  $\mathbb{P}_x$). Also let us note that	under Assumption \ref{hyintegr} we have
		\begin{equation}
	0<f_t(0)=(2\pi)^{-1}||\exp(t\psi)||_{L^1(\mathbb{R})}<\infty,\,\, \text{for all}\,\, t>0.\label{fzero}
	\end{equation}
			\begin{proposition}\label{propschilling}
The following limits exist locally uniformly for all  $x\in \mathbb{R}$ :
				\begin{alignat}{2}
				\lim\limits_{t\longrightarrow \infty}\dfrac{\mathbb{E}_x[g(X_t)]}{||\exp(t\psi)||_{L^1}}&=(2\pi)^{-1}\dint_{\mathbb{R}}g(z)dz,\,\text{for all}\, g\in L^{1}(\mathbb{R}),\label{eqschilling1} \\  \notag \\
				\lim\limits_{t\longrightarrow \infty}\dfrac{f_t(x)}{f_t(0)}&=1.\label{eqschilling3}
				\end{alignat}
				
			\end{proposition}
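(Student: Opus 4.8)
The plan is to reduce both limits to a single concentration property of the family of probability measures
\[
\mu_t(dy):=\frac{\exp(t\psi(y))}{\|\exp(t\psi)\|_{L^1}}\,dy,\qquad t>0,
\]
which are well defined because $\exp(t\psi)\geq 0$ and, by \eqref{fzero}, $\|\exp(t\psi)\|_{L^1}=2\pi f_t(0)\in(0,\infty)$. Using the symmetry of $X$ (so that $\psi$ is even and the sine contribution vanishes) together with \eqref{eqfrepresentation}, I would first record the identities
\[
\frac{f_t(x)}{f_t(0)}=\int_{\mathbb{R}}\cos(xy)\,\mu_t(dy),\qquad \frac{\mathbb{E}_x[g(X_t)]}{\|\exp(t\psi)\|_{L^1}}=(2\pi)^{-1}\int_{\mathbb{R}}e^{-ixy}\,\widetilde{g}(y)\,\mu_t(dy),
\]
where $\mathbb{E}_x[g(X_t)]=\int_{\mathbb{R}}g(z)f_t(z-x)\,dz$ and $\widetilde{g}(y):=\int_{\mathbb{R}}g(z)e^{izy}\,dz$ is bounded and continuous with $\widetilde{g}(0)=\int_{\mathbb{R}}g(z)\,dz$; the second identity follows from \eqref{eqfrepresentation} and Fubini's theorem, valid since $g$ and $\exp(t\psi)$ are integrable. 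In this form \eqref{eqschilling3} and \eqref{eqschilling1} read $\int\varphi_x\,d\mu_t\to\varphi_x(0)$ as $t\to\infty$, locally uniformly in $x$, for the bounded continuous functions $\varphi_x(y)=\cos(xy)$ and $\varphi_x(y)=(2\pi)^{-1}e^{-ixy}\widetilde{g}(y)$.

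The heart of the matter is then to show that $\mu_t$ concentrates at the origin, i.e. $\mu_t(\{|y|>\delta\})\to 0$ as $t\to\infty$ for each fixed $\delta>0$. First I would prove that $m(\delta):=\sup_{|y|\geq\delta}\psi(y)<0$. Since $\exp(\psi(\cdot))$ is the characteristic function of the density $f_1\in L^1$, the Riemann--Lebesgue lemma gives $\exp(\psi(y))\to 0$, hence $\psi(y)\to-\infty$, as $|y|\to\infty$; together with the continuity of $\psi$ this shows that the supremum over the closed set $\{|y|\geq\delta\}$ is attained. Moreover $\psi\leq 0$, and $\psi(y_0)=0$ for some $y_0\neq 0$ would force $|\mathbb{E}[e^{iy_0X_t}]|=1$, hence the law of $X_t$ to be supported on a lattice, contradicting the absolute continuity granted by Assumption \ref{hyintegr}. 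Thus $\psi<0$ on $\{y\neq 0\}$ and the attained maximum $m(\delta)$ is strictly negative.

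With $m(\delta)<0$ available, the concentration follows from a two-sided estimate. For $t>1$ and $|y|\geq\delta$ one has $\exp(t\psi(y))\leq\exp((t-1)m(\delta))\exp(\psi(y))$, so that
\[
\int_{|y|>\delta}\exp(t\psi(y))\,dy\leq\exp\big((t-1)m(\delta)\big)\,\|\exp(\psi)\|_{L^1}.
\]
For the normalisation, the continuity of $\psi$ at $0$ (with $\psi(0)=0$) provides, for any fixed $\eta\in(0,|m(\delta)|)$, some $\delta_0>0$ with $\psi\geq-\eta$ on $[-\delta_0,\delta_0]$, whence $\|\exp(t\psi)\|_{L^1}\geq 2\delta_0 e^{-\eta t}$. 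Dividing the two bounds gives $\mu_t(\{|y|>\delta\})\leq C\exp\big(t(m(\delta)+\eta)\big)\to 0$, because $m(\delta)+\eta<0$. I expect this to be the main obstacle: one must control the normalising mass from below at the very moment the tail mass decays, and it is this interplay that makes the ratio vanish.

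Finally I would deduce both limits, with their local uniformity, by the standard truncation argument. Writing $\int\varphi_x\,d\mu_t-\varphi_x(0)=\int(\varphi_x(y)-\varphi_x(0))\,\mu_t(dy)$ and splitting at $|y|\leq\delta$, the inner integrand is, uniformly for $|x|\leq M$, at most of order $\tfrac12 M^2\delta^2$ in the cosine case and of order $M\delta\|g\|_{L^1}+\omega_{\widetilde g}(\delta)$ in the general case (here $\omega_{\widetilde g}$ is the modulus of continuity of $\widetilde g$), while the outer integral is bounded by a constant times $\mu_t(\{|y|>\delta\})$. Choosing $\delta$ small and then $t$ large makes both contributions uniformly small on $\{|x|\leq M\}$, yielding \eqref{eqschilling3} and \eqref{eqschilling1} locally uniformly. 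For \eqref{eqschilling1} one could alternatively use the bound $f_t\leq f_t(0)$ coming from $\cos\leq 1$ together with dominated convergence, but the truncation is what secures uniformity in $x$.
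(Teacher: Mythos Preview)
Your argument is correct. The concentration property of the normalised measures $\mu_t$ is precisely the right mechanism, and your justification that $\sup_{|y|\ge\delta}\psi(y)<0$ (via Riemann--Lebesgue to handle $|y|\to\infty$, and the lattice contradiction to rule out $\psi(y_0)=0$) together with the two-sided exponential estimate is clean and complete. The identities expressing both ratios as $\int\varphi_x\,d\mu_t$ are correctly derived, and the truncation at $|y|\le\delta$ yields the local uniformity in $x$ as you describe.

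Regarding comparison with the paper: note that the paper does \emph{not} prove this proposition. It is stated as a known result, attributed to Schenk \cite{Sc} in dimension one and to Knopova--Schilling \cite{KS} in higher dimensions, and is then used as a black box (notably in Corollary~\ref{corschilling}). What you have supplied is therefore a self-contained proof where the paper simply quotes the literature. Your approach---reducing everything to the weak concentration $\mu_t\Rightarrow\delta_0$ via Fourier inversion---is in fact close in spirit to the argument in \cite{KS}, which likewise exploits the strict negativity of $\psi$ away from the origin and the integrability assumption to control the tails. The main benefit of writing it out, as you have, is that it makes transparent exactly which features of Assumption~\ref{hyintegr} are used: integrability of $e^{t\psi}$ for the Fubini step and for the tail bound, and the resulting absolute continuity of $X_t$ to exclude $\psi$ vanishing off the origin.
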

		The following corollary is needed in very few places in the sequel.	
			\begin{corollary}\label{corschilling}
The following limits exist locally uniformly for all  $x\in \mathbb{R}$ :
				\begin{alignat}{2}
					(i) \lim\limits_{r\longrightarrow \infty}\dfrac{f_{r-t}(x)}{f_r(0)}&=1,\,\,\text{for all } t\geq 0,\label{eqrmkschilling} \\ \notag\\
(ii) \lim\limits _{r\longrightarrow\infty}\dfrac{f_{r-t}(z-x)}{f_{r}(z)}&=1,\,\,\text{for all } t\geq 0,	\label{asybeh}				
					\end{alignat}
				
for any $z\in\mathbb{R}$ such that $0<f_t(z)<\infty,\, \text{for all }\, t>0$.
				\end{corollary}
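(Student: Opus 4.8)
The plan is to reduce both limits to Proposition~\ref{propschilling} by inserting the ``diagonal'' values $f_{r-t}(0)$ and $f_r(0)$ as pivots and factoring each quotient into pieces that are individually governed by \eqref{eqschilling1} or \eqref{eqschilling3}. Throughout I would use that each $f_t$ is a probability density, so $f_t\in L^1(\mathbb{R})$ with $\int_{\mathbb{R}}f_t(z)\,dz=1$, that the symmetry of $X$ makes every $f_t$ even, and the identity $f_t(0)=(2\pi)^{-1}\|\exp(t\psi)\|_{L^1}$ recorded in \eqref{fzero}.

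For part (i), I would factor
\begin{equation}
\frac{f_{r-t}(x)}{f_r(0)}=\frac{f_{r-t}(x)}{f_{r-t}(0)}\cdot\frac{f_{r-t}(0)}{f_r(0)}.
\end{equation}
Since $r-t\to\infty$ as $r\to\infty$, the first factor tends to $1$ locally uniformly in $x$ by \eqref{eqschilling3}. The crux is the second, $x$-independent, factor. Here I would invoke the Chapman--Kolmogorov identity \eqref{eqChapmanKolmogorov} together with the evenness of $f_t$ to write $f_r(0)=\int_{\mathbb{R}}f_t(y)f_{r-t}(y)\,dy=\mathbb{E}_0\!\left[f_t(X_{r-t})\right]$. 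Applying \eqref{eqschilling1} with $g=f_t\in L^1(\mathbb{R})$ at the point $x=0$ gives $f_r(0)/\|\exp((r-t)\psi)\|_{L^1}\to(2\pi)^{-1}\int_{\mathbb{R}}f_t(z)\,dz=(2\pi)^{-1}$, and dividing by $f_{r-t}(0)=(2\pi)^{-1}\|\exp((r-t)\psi)\|_{L^1}$ yields $f_{r-t}(0)/f_r(0)\to 1$. The two factors combine to give (i), and the convergence is locally uniform because only the first factor depends on $x$.

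For part (ii), fix $z$ with $0<f_t(z)<\infty$ for all $t>0$; this guarantees that the denominators $f_r(z)$ are positive, so the quotient is well defined for every $r>t$. I would factor
\begin{equation}
\frac{f_{r-t}(z-x)}{f_r(z)}=\frac{f_{r-t}(z-x)}{f_{r-t}(0)}\cdot\frac{f_{r-t}(0)}{f_r(0)}\cdot\frac{f_r(0)}{f_r(z)}.
\end{equation}
As $x$ runs over a compact set the argument $z-x$ runs over a compact set, so the first factor tends to $1$ uniformly in $x$ by \eqref{eqschilling3}; the second factor tends to $1$ by the computation just carried out in (i); and the third factor tends to $1$ by \eqref{eqschilling3} applied at the fixed point $z$ (using $f_r(z)>0$). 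Multiplying the three limits gives (ii), again locally uniformly in $x$.

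I expect the only real obstacle to be the pivot ratio $f_{r-t}(0)/f_r(0)$, equivalently the ratio of $L^1$-norms $\|\exp((r-t)\psi)\|_{L^1}/\|\exp(r\psi)\|_{L^1}$, since neither norm is explicit. Recognizing $f_r(0)$ as the expectation $\mathbb{E}_0[f_t(X_{r-t})]$ and feeding $g=f_t$ into \eqref{eqschilling1} is precisely what makes this ratio tractable; the remaining steps are routine applications of \eqref{eqschilling3} and bookkeeping of the local uniformity.
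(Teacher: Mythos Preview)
Your proof is correct and follows essentially the same approach as the paper: both factor out $f_{r-t}(0)/f_r(0)$ via \eqref{eqschilling3}, then handle this pivot ratio by rewriting one of the diagonal values through Chapman--Kolmogorov as an expectation of $f_t$ and invoking \eqref{eqschilling1} with $g=f_t$. The only cosmetic difference is that the paper passes to the equivalent ratio $f_{r+t}(0)/f_r(0)$ and writes $f_{r+t}(0)=\mathbb{E}[f_t(-X_r)]$, whereas you write $f_r(0)=\mathbb{E}_0[f_t(X_{r-t})]$ directly; for part (ii) your explicit three-factor decomposition spells out what the paper covers with ``by the same way.''
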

				\begin{proof}
In order to prove \eqref{eqrmkschilling} it is sufficient, with the help of \eqref{eqschilling3}, to show that
 \begin{equation}
						\lim\limits_{r\longrightarrow \infty}\dfrac{f_{r-t}(0)}{f_r(0)}=1,\label{eqschillingapp}
						\end{equation}
						which is equivalent to 
						\begin{equation}
						\lim\limits_{r\longrightarrow \infty}\dfrac{f_{r+t}(0)}{f_r(0)}=1.\label{eqvschillingapp}
						\end{equation}
						Now the Chapman-Kolmogorov identity \eqref{eqChapmanKolmogorov} gives 
						\begin{equation*}
						f_{r+t}(0)=\mathbb{E}[f_{t}(-X_r)].
						\end{equation*}
						Using \eqref{fzero}, \eqref{eqschilling1} and the fact that $f_t$ is a probability density function we obtain \eqref{eqrmkschilling}. By the same way we get \eqref{asybeh}.
						\end{proof}
						\subsection{L\'evy bridges of deterministic length}
Let $r>0$ and $z\in \mathbb{R}$ such that 
		\begin{equation}
		0<f_r(z)<\infty.\label{equationf_tcondition}
		\end{equation}
		We emphasize that this condition is satisfied for $z$ in the interior of the support of the law $\mathbb{P}_{X_{t}}$, cf Sharpe \cite{Sh}.
		
		A Lévy bridges from $0$ to $z$ of deterministic length $r>0$ is
a Lévy process conditioned to start at $0$ end at $z$ at the given
time $r$, provided that such a process exists. Markovian bridges, in particular Lévy bridges, were first
constructed in \cite{Kal81} using the convergence criteria for processes with exchangeable increments of \cite{Kal73}, then afterwards in \cite{FPY} under duality hypothesis and recently in \cite{CUB} under the Assumption
$\ref{hyintegr}$. 

Since $\mathbb{\ensuremath{P}}_{X_{t}}(dx)=f_{t}(x)\,dx$ then using
Tonelli's theorem we obtain
\[
\int_{\mathbb{R}}\,h(x)\,\mathbb{E}_{x}\left(g(X_{t})\right)\,dx=\int_{\mathbb{R}}\,g(x)\,\mathbb{E}_{x}\left(h(-X_{t})\right)\,dx,
\]
for all $t>0$ and all positive measurable functions $g$ and $h$.
This means that there is a second right process $\hat{X}$ with càdlàg
paths in duality with $X$ relative to the Lebesgue measure. In other
words, the semigroups $(P_{t})_{t\ge0}$ of $X$, $P_{t}(g)(x):=\mathbb{E}_{x}\left(g(X_{t})\right)$,
and $(\hat{P}_{t})_{t\ge0}$ of $\hat{X}$, $\hat{P}_{t}(h)(x):=\mathbb{E}_{x}\left(h(-X_{t})\right)$,
are related by 
\begin{equation}
\int_{\mathbb{R}}\,h(x)\,P_{t}(g)(x)\,dx=\int_{\mathbb{R}}\,g(x)\,\hat{P}_{t}(h)(x)\,dx.\label{duality}
\end{equation}
Moreover, it is easy to see that the transition probabilities of $\hat{X}$ is given by 
\[
\mathbb{\hat{P}}_{t}(x,dy)=f_{t}(x-y)dy.
\]
See \cite{FPY}, \cite{GS} and \cite{W}  for a detailed account.

Now putting \eqref{eqtransitiondensity}, \eqref{eqChapmanKolmogorov} and \eqref{duality} together we may apply Proposition 1 in \cite{FPY} to conclude that there exists a Lévy bridge, $X^{r,z}= \left\lbrace X_t^{r,z}, 0\leq t\leq r \right\rbrace$, from $0$ to $z$ of length $r>0$ associated with the L\'evy process $X$. The process $X^{r,z}$ thus constructed can be realized
as the coordinate process on the Skorohod space $\mathbb{D}_{r}$ of càdlàg functions
from $[0,r]$ to $\mathbb{R}$. 
Furthermore, it is a non-homogeneous strong Markov process with transition
		densities given by
		\begin{align}
		\mathbb{P}(X_u^{r,z}\in dy\vert X_t^{r,z}=x)&=\dfrac{f_{u-t}(y-x)f_{r-u}(z-y)}{f_{r-t}(z-x)}dy, \,\,\,\,0\leq t<u<r.\label{eqtransitionlawofX^{r,z}}
		\end{align}
		Consequently, the finite-dimensional densities of $\left\lbrace X^{r,z},0<t<r \right\rbrace$ exist and, given $x_0=t_0=0$, for every $n\in \mathbb{N}$, $0<t_1<t_2<...<t_n<r$, and $(x_1,x_2,...,x_n) \in \mathbb{R}^n$, we have
		\begin{align*}
		\mathbb{P}(X^{r,z}_{t_1}\in dx_1,\ldots,X^{r,z}_{t_n}\in dx_n )&=\varphi_{X_{t_{1}}^{r,z},\ldots,X_{t_{n}}^{r,z}}\left(x_{1},\ldots,x_{n}\right)\,dx_1\ldots dx_n,
		\end{align*}
			where 
			\begin{equation}
			\varphi_{X_{t_{1}}^{r,z},\ldots,X_{t_{n}}^{r,z}}\left(x_{1},\ldots,x_{n}\right)=\dfrac{f_{r-t_n}(z-x_n)}{f_r(z)}\prod_{i=1}^{n}f_{t_i-t_{i-1}}(x_i-x_{i-1}).\label{equationfinidimensionaldensity}
			\end{equation}
\begin{remark}\label{ratio}
	(i) The condition \eqref{equationf_tcondition} is required to ensure that the ratio in \eqref{eqtransitionlawofX^{r,z}} is well defined, i.e., \eqref{equationf_tcondition} is suffices to ensure that 
	$$ y\longrightarrow \dfrac{f_{u-t}(y-X_t^{r,z})f_{r-u}(z-y)}{f_{r-t}(z-X_t^{r,z})} $$
	is a well defined probability density function for almost every value of $X_t^{r,z}$. Indeed, using the Chapman-Kolmogorov identity \eqref{eqChapmanKolmogorov}, one can show
	\[
\begin{array}{lll}
\mathbb{E}\left(\dint_{\mathbb{R}}\dfrac{f_{u-t}(y-X_{t}^{r,z})f_{r-u}(z-y)}{f_{r-t}(z-X_{t}^{r,z})}\,dy\right) & = & \dint_{\mathbb{R}}\int_{\mathbb{R}}\,\dfrac{f_{u-t}(y-x)\,f_{r-u}(z-y)}{f_{r-t}(z-x)}\,\mathbb{P}(X_{t}^{r,z}\in dx)\,dy\\
\\
 & = & \dint_{\mathbb{R}}\int_{\mathbb{R}}\,\dfrac{f_{u-t}(y-x)\,f_{r-u}(z-y)}{f_{r-t}(z-x)}\,\dfrac{f_{t}(x)f_{r-t}(z-x)}{f_{r}(z)}\,dx\,dy\\
\\
 & = & \dint_{\mathbb{R}}\,\dfrac{f_{r-u}(z-y)}{f_{r}(z)}\int_{\mathbb{R}}f_{u-t}(y-x)\,f_{t}(x)\,dx\,dy\\
\\
 & = & \dfrac{1}{f_{r}(z)}\dint_{\mathbb{R}}\,f_{r-u}(z-y)\,f_{u}(y)\,dy=1.
\end{array}
\]

	Thus $$ \mathbb{P}\bigg(\dint_{\mathbb{R}}\,\dfrac{f_{u-t}(y-X_t^{r,z})f_{r-u}(z-y)}{f_{r-t}(z-X_t^{r,z})}dy=1\bigg)=1. $$

(ii) In all that follows the process  $X^{r,z}$ is continued beyond time $r$ with the constant value $z$. We thereby identify $ \left\lbrace X_t^{r,z}, t\geq 0 \right\rbrace$ with the process $\tilde{X}^{r,z}:=\left\lbrace  X_t^{r,z}\mathbb{I}_{\{t<r\}}+z\mathbb{I}_{\{t\geq r\}},\, t\geq 0\right\rbrace$.

\end{remark}

\begin{center}
	\section{L\'evy bridges with random length}\label{sectionstoppingtimeproperty}
\end{center}
In this section our first task is to define the Lévy bridge $\zeta^z$ from $0$ to $z$ of random length $\tau$.  Then we prove that the random time $\tau$ is a stopping time with respect to $\mathbb{F}^{\zeta^z,c}$ and we give the regular conditional distribution of $\tau$ and $(\tau,\zeta_.^z)$ given the n-coordinate of $\zeta^z$. Thereafter we show that the process $\zeta^z$ is a non-homogeneous Markov process with respect to its completed natural filtration $\mathbb{F}^{\zeta^z,c}$. Finally, under additional conditions, we prove that the filtration $\mathbb{F}^{\zeta^z,c}$ is right-continuous.\\		

\begin{definition}\label{defzeta^z}
		Let $\tau: (\Omega,\mathcal{F},\mathbb{P}) \longmapsto (0,+\infty)$ be a strictly positive random time, with distribution function $F(t) := \mathbb{P}(\tau \leq t)$, $t \geq 0$. We say that a process $\zeta^z$ is the bridge  from $0$ to $z$ of random length $\tau$ derived from the Lévy process $X$ if the following are satisfied:
		\begin{enumerate}
			\item[(i)] $0<f_r(z)<\infty$ for $\mathbb{P}_{\tau}$ almost every $r$.
			\item[(ii)] The conditional distribution of $\zeta^z$ given $\left\{ \tau=r\right\} $ is the law of the process $X^{r,z}$.
		\end{enumerate}
		
			We denote by $\mathcal{Z}$ the set of $z$ satisfying (i).
		\end{definition}
		\begin{remark}
(i) It should be noted, under Assumption \ref{hyintegr}, that $0<f_t(0)<\infty$ for all $t>0$, see \eqref{fzero}. Moreover, once again under Assumption \ref{hyintegr}, it follows from \cite{Sh} that there exist a constant $c$ such that the set $\left\lbrace (t,z)\in (0,+\infty)\times \mathbb{R}:f_{t}(z)>0\right\rbrace$ is contained in the wedge $\left\lbrace (t,z):z>ct\right\rbrace$. This entails that $c \in [-\infty, 0[$ and $[0,+\infty[$ is contained  in $\mathcal{Z}$.

On the other hand, taking into account that $X$ satisfies assumption \ref{hyintegr}, if we assume in addition that $X$ is not a compound Poisson process and for all $c\geq 0$, the process $\left\lbrace\vert X_{t}-ct\vert, t\geq 0 \right\rbrace$ is not a subordinator then it follows from \cite{Sh} that $$0<f_t(z)<\infty, \, \text{for all}\, t > 0\,\, \text{and}\, \, z \in  \mathbb{R}.$$In this case $\mathcal{Z}$ is the whole $\mathbb{R}$.

(ii) The process $\zeta^z$ can be realized as follows : Consider the probability space $\left(\tilde{\Omega},\tilde{\mathcal{F}},\tilde{\mathbb{P}}\right)$
\[
\tilde{\Omega}=\mathbb{D}_{\infty}\times]0,+\infty[,\quad\tilde{\mathcal{F}}=\mathcal{F}\otimes\mathcal{B}\left(]0,+\infty[\right)\quad\tilde{\mathbb{P}}\left(dw,dr\right)=\mathbb{P}_{X^{r,z}}(dw)\,\mathbb{P}_{\tau}(dr).
\]

We write $\tilde{w}=(w,r)$ for the generic point of $\tilde{\Omega}$.
Now define 
\[
\tilde{\tau}(\tilde{w})=r\quad\text{and \quad}\tilde{\zeta}_{t}^{z}(\tilde{w})=X_{t}^{r,z}(w),\quad t\geq0.
\]
Thus $$ \tilde{\mathbb{P}}\left(\tilde{\tau}\leq t \right) =\mathbb{P}(\tau \leq t),\,\, t\geq 0,$$ and the conditional distribution of $\tilde{\zeta}^{z}$ given $\left\{ \tilde{\tau}=r\right\} $
is the law $\mathbb{P}_{X^{r,z}}$.  Moreover the process thus defined has a càdlàg paths and  satisfies $\tilde{\zeta}_{t}^{z}=z$ when $\tilde{\tau} \leq t$.
\end{remark}
\subsection{Weak continuity, stopping time property and conditional laws}	  
We begin with the weak continuity of the the L\'evy  bridge $\zeta^z$ with random length $\tau$ with respect to its ending point $z$.
	\begin{theorem}
		The map $z\longrightarrow \mathbb{P}_{\zeta^z}$ is weakly continuous in the sense that  the laws $\mathbb{P}_{\zeta^z}$ are weakly continuous on $\mathbb{D}_{\infty}$ with respect to $z\in \mathcal{Z}$.
	\end{theorem}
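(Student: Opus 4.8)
The plan is to reduce weak continuity of $z\mapsto\mathbb P_{\zeta^z}$ on $\mathbb D_\infty$ to the corresponding statement for the deterministic-length bridge, and then to prove the latter by combining convergence of finite-dimensional laws with tightness. Fix a bounded continuous functional $F$ on $\mathbb D_\infty$ and a sequence $z_n\to z$ in $\mathcal Z$. By property (ii) of Definition \ref{defzeta^z}, conditioning on $\{\tau=r\}$ gives
\[
\mathbb E\bigl[F(\zeta^{z_n})\bigr]=\int_{(0,\infty)}\mathbb E\bigl[F(X^{r,z_n})\bigr]\,\mathbb P_\tau(dr),
\]
and the same identity with $z$ in place of $z_n$. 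Since $|\mathbb E[F(X^{r,z_n})]|\le\|F\|_\infty$, the constant $\|F\|_\infty$ dominates the integrand, so by dominated convergence it suffices to show that, for $\mathbb P_\tau$-almost every $r$, the map $z\mapsto\mathbb E[F(X^{r,z})]$ is continuous at $z$ along $(z_n)$; here condition (i) of Definition \ref{defzeta^z} guarantees $0<f_r(z)<\infty$ and $0<f_r(z_n)<\infty$ simultaneously for a.e.\ $r$ (intersect the countably many exceptional sets). Thus the whole problem reduces to proving, for fixed $r>0$, the weak convergence $\mathbb P_{X^{r,z_n}}\Rightarrow\mathbb P_{X^{r,z}}$ on $\mathbb D_\infty$ of the bridges of fixed length, each path being extended by the constant value of its endpoint beyond $r$ as in Remark \ref{ratio}(ii).

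For the finite-dimensional laws I would argue as follows. For $0<t_1<\dots<t_k<r$ the joint density \eqref{equationfinidimensionaldensity} is
\[
\frac{f_{r-t_k}(z-x_k)}{f_r(z)}\prod_{i=1}^{k}f_{t_i-t_{i-1}}(x_i-x_{i-1}).
\]
By the joint continuity of $f$ on $(0,\infty)\times\mathbb R$ and $f_r(z)>0$, these densities converge pointwise as $z_n\to z$ to the corresponding density for $z$. Each is a probability density and the limit is again a probability density, so Scheff\'e's lemma upgrades pointwise convergence to convergence in total variation of the finite-dimensional laws. For times $t\ge r$ the coordinate equals the deterministic endpoint $z_n\to z$, so those marginals converge trivially. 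Hence all finite-dimensional laws of $X^{r,z_n}$ converge to those of $X^{r,z}$.

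It remains to prove tightness of $\{\mathbb P_{X^{r,z_n}}\}_n$ in $\mathbb D_\infty$, which I expect to be the main difficulty. I would split the time axis at $r-\delta$ for small $\delta>0$. On $[0,r-\delta]$ the law of the bridge is absolutely continuous with respect to that of $X$, with Radon--Nikodym derivative $f_{\delta}(z-X_{r-\delta})/f_r(z)$ on $\mathcal G_{r-\delta}$, as read off from \eqref{equationfinidimensionaldensity}; these densities converge in $L^1(\mathbb P_X)$ by Scheff\'e, hence form a uniformly integrable family, which lets one transfer an Aldous-type modulus-of-continuity bound from the single tight law $\mathbb P_X$ to the bridges uniformly in $n$. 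The delicate part is the endpoint interval $[r-\delta,\infty)$: one must show
\[
\lim_{\delta\to0}\ \sup_{n}\ \mathbb P\Bigl(\sup_{t\in[r-\delta,r]}|X^{r,z_n}_t-z_n|>\eta\Bigr)=0\qquad\text{for every }\eta>0,
\]
that is, that the pinning of the bridge to its endpoint is uniform along $z_n\to z$. This is where the asymptotic control of the densities enters: using the transition law \eqref{eqtransitionlawofX^{r,z}} together with Proposition \ref{propschilling} and Corollary \ref{corschilling}, one estimates the probability that the reversed bridge has travelled far from $z_n$ in time $\delta$, uniformly over the convergent sequence $z_n$. Granting this uniform endpoint estimate, a functional of the whole path may be approximated by one depending only on the restriction to $[0,r-\delta]$ with the constant continuation, for which the total-variation convergence of the second step applies; combined with the tightness just described this yields $\mathbb P_{X^{r,z_n}}\Rightarrow\mathbb P_{X^{r,z}}$, and the reduction of the first step completes the proof. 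The hard point, to be treated carefully, is precisely this uniform-in-$n$ endpoint estimate near the pinning time $r$.
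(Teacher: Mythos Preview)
Your reduction step---conditioning on $\{\tau=r\}$ and invoking dominated convergence to pass from $\zeta^z$ to the fixed-length bridges $X^{r,z}$---is exactly what the paper does, and your finite-dimensional argument via Scheff\'e is essentially equivalent to the paper's (the paper phrases the extension beyond $r$ through Slutsky's theorem, but the content is the same).

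The real divergence is in how tightness of $\{\mathbb P_{X^{r,z_n}}\}_n$ on $\mathbb D_\infty$ is obtained. The paper does not prove this from scratch: it invokes Corollary~1 of Chaumont and Uribe-Bravo \cite{CUB}, which establishes weak continuity of $z\mapsto\mathbb P_{X^{r,z}}$ on $\mathbb D_r$ directly under Assumption~\ref{hyintegr}. Tightness on $\mathbb D_\infty$ then follows because a weakly convergent family on $\mathbb D_r$ is tight there, and the constant extension past $r$ is harmless. Your attempt to prove tightness by hand, splitting at $r-\delta$ and controlling the endpoint, is a natural strategy and is in fact close to how \cite{CUB} proceeds, but you explicitly leave the uniform-in-$n$ endpoint estimate
\[
\lim_{\delta\to0}\ \sup_{n}\ \mathbb P\Bigl(\sup_{t\in[r-\delta,r]}|X^{r,z_n}_t-z_n|>\eta\Bigr)=0
\]
as an unproven ``hard point.'' This is a genuine gap: it is precisely the substance of the cited result, and it does not follow from anything developed in the present paper.

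Moreover, the tools you propose for that estimate are the wrong ones. Proposition~\ref{propschilling} and Corollary~\ref{corschilling} describe the behaviour of $f_t(x)/f_t(0)$ and related ratios as $t\to\infty$ (or $r\to\infty$); they say nothing about the short-time regime $\delta\to0$ that governs the pinning. The endpoint control in \cite{CUB} comes instead from a time-reversal/exchangeability argument for bridges combined with small-time estimates for the L\'evy process itself, not from large-time density asymptotics. So either cite \cite{CUB} as the paper does, or be prepared to reproduce that short-time argument; invoking Proposition~\ref{propschilling} will not close the gap.
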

	\begin{proof}
	 First recall that the bridge laws $\mathbb{P}_{ X^{r,z}}$ on $\mathbb{D}_{r}$ are weakly  continuous with respect to $z$. This follows at once from Assumption \ref{hyintegr}, see Corollary 1 in \cite{CUB}. Hence, if $K$ is a compact set in $\mathcal{Z}$, it is easy to see that the family of laws $\left(\mathbb{P}_{ \tilde{X}^{r,z}}\right)_{ z\in K}$ on $\mathbb{D}_{\infty}$ are tight. So in order to establish the weak convergence of $\mathbb{P}_{ \tilde{X}^{r,z}}$ to $\mathbb{P}_{ \tilde{X}^{r,\tilde{z}}}$ as $z$ tends to $\tilde{z}\in \mathcal{Z}$, we need to prove the finite dimensional distributions convergence which is a simple consequence of Slutsky's theorem. Now making the identification $X^{r,\tilde{z}}=\tilde{X}^{r,\tilde{z}}$ we have
		\begin{align*}
			\lim\limits_{z\longrightarrow \tilde{z}}\mathbb{E}[F(\zeta^z_t, t\geq 0)]&=\lim\limits_{z\longrightarrow \tilde{z}}\dint_{(0,\infty)}\mathbb{E}\left[F(\zeta^z_t, t\geq 0)|\tau=r\right]\,\mathbb{P}_{\tau}(dr)\\
			&=\lim\limits_{z\longrightarrow \tilde{z}}\dint_{(0,\infty)}\mathbb{E}\left[F(X_t^{r,z}, t\geq 0)\right]\,\mathbb{P}_{\tau}(dr)\\
			&=\dint_{(0,\infty)}\mathbb{E}[(X_t^{r,\tilde{z}}, t\geq 0)]\, \mathbb{P}_{\tau}(dr)\\
			&=\mathbb{E}[F(\zeta^{\tilde{z}}_t, t\geq 0)],
		\end{align*}
		for every bounded continuous functional $F$ on $\mathbb{D}_{\infty}$ which ends the proof.
	\end{proof} 
	In the next result we study the stopping time property of the random time $\tau$ with respect to the completed natural filtration of $\zeta^z$.
	\begin{proposition}
        For all $t>0$, we have 
        \begin{equation}\label{stpprop}
        \mathbb{P}\left(\{\zeta^z_t = z\} \bigtriangleup \{\tau \leq t\}\right)=0.
        \end{equation}
		Then $\tau $ is a stopping time with respect to $\mathbb{F}^{\zeta^z,c}$ and consequently it is a stopping time with respect to $\mathbb{F}^{\zeta^z,c}_{+}$.
	\end{proposition}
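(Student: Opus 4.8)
The whole statement rests on the symmetric-difference identity \eqref{stpprop}: once it is in hand, the stopping-time assertions follow by pure bookkeeping. I would prove \eqref{stpprop} by passing to the canonical realization $(\tilde\Omega,\tilde{\mathcal F},\tilde{\mathbb P})$ of the preceding remark, where $\tilde\zeta^z_t(w,r)=X^{r,z}_t(w)$, $\tilde\tau(w,r)=r$ and $\tilde{\mathbb P}(dw,dr)=\mathbb P_{X^{r,z}}(dw)\,\mathbb P_\tau(dr)$. Conditioning on $\{\tau=r\}$ turns the probability into a fibrewise question about the deterministic-length bridge $X^{r,z}$, namely
\[
\mathbb P\!\left(\{\zeta^z_t=z\}\bigtriangleup\{\tau\le t\}\right)=\int_{(0,\infty)}\mathbb P_{X^{r,z}}\!\left(\{X^{r,z}_t=z\}\bigtriangleup\{r\le t\}\right)\mathbb P_\tau(dr),
\]
so it suffices to show that the integrand vanishes for $\mathbb P_\tau$-almost every $r$.

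\textbf{The two regimes.} I would split according to the position of $r$ relative to $t$. For $r\le t$ the continuation convention of Remark \ref{ratio}(ii) forces $X^{r,z}_t=z$ (since $t\ge r$), while the fibre event $\{r\le t\}$ is the whole space; the two sets coincide and their symmetric difference is empty. For $r>t$ the event $\{r\le t\}$ is empty, so the symmetric difference reduces to $\{X^{r,z}_t=z\}$; here I would invoke the one-dimensional marginal read off from \eqref{equationfinidimensionaldensity} with $n=1$, i.e. the law of $X^{r,z}_t$ is absolutely continuous with density $x\mapsto f_{r-t}(z-x)f_t(x)/f_r(z)$, which is legitimate because $0<f_r(z)<\infty$ for $\mathbb P_\tau$-a.e. $r$ by Definition \ref{defzeta^z}(i). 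An absolutely continuous law charges no single point, so $\mathbb P_{X^{r,z}}(X^{r,z}_t=z)=0$. In both regimes the integrand is zero, and integrating against $\mathbb P_\tau$ gives \eqref{stpprop}.

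\textbf{From \eqref{stpprop} to the stopping-time property.} The event $\{\zeta^z_t=z\}$ belongs to $\mathcal F^{\zeta^z}_t=\sigma(\zeta^z_s,\,s\le t)$. By \eqref{stpprop} the set $N_t:=\{\zeta^z_t=z\}\bigtriangleup\{\tau\le t\}$ lies in $\mathcal N_{P}$, and since $\{\tau\le t\}=\{\zeta^z_t=z\}\bigtriangleup N_t$, we obtain $\{\tau\le t\}\in\mathcal F^{\zeta^z}_t\vee\mathcal N_{P}=\mathcal F^{\zeta^z,c}_t$ for every $t>0$; moreover $\{\tau\le 0\}=\emptyset$ because $\tau$ is strictly positive. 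Hence $\tau$ is an $\mathbb F^{\zeta^z,c}$-stopping time, and as $\mathcal F^{\zeta^z,c}_t\subseteq\mathcal F^{\zeta^z,c}_{t^{+}}$, it is a fortiori an $\mathbb F^{\zeta^z,c}_{+}$-stopping time.

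\textbf{Main obstacle.} The only genuinely analytic input is the early-hitting fact $\mathbb P(X^{r,z}_t=z)=0$ for $t<r$, which is precisely where Assumption \ref{hyintegr} together with the admissibility condition \eqref{equationf_tcondition} enters, through the existence and explicit form of the transition density underlying \eqref{equationfinidimensionaldensity}. Everything else is routine: the measurability in $r$ of the fibre probabilities needed to justify the conditioning step, and the elementary set-algebra relating symmetric differences to membership in the completed filtration.
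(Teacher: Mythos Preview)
Your proof is correct and follows essentially the same route as the paper: both arguments condition on $\{\tau=r\}$, use the continuation convention to handle $r\le t$ (the paper phrases this as the inclusion $\{\tau\le t\}\subseteq\{\zeta^z_t=z\}$), and then invoke the absolute continuity of the law of $X^{r,z}_t$ for $t<r$ to kill the remaining piece. The deduction of the stopping-time property from \eqref{stpprop} is likewise identical in substance.
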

	\begin{proof}
		First we have $\{\tau\leq t\}\subseteq \{\zeta^z_t=z\}$. On the other hand, using the formula of total probability and the assertion (ii) of Definition \ref{defzeta^z}, we obtain
		\begin{align*}
		\mathbb{P}(\zeta^z_{t}=z,t<\tau)&=\dint_{(t,+\infty)} \mathbb{P}(\zeta^z_{t}=z | \tau=r) \mathbb{P}_{\tau}(dr)
		\\  
		&=\dint_{(t,+\infty)} \mathbb{P}(X_{t}^{r,z}=z) \mathbb{P}_{\tau}(dr) \\
		&=0.
		\end{align*}
	    The latter equality uses the fact that the distribution of $X_t^{r,z}$ is absolutely continuous with respect to Lebesgue measure for all $r>t>0$. Thus $\mathbb{P}\left(\{\zeta_t^z = z\} \bigtriangleup \{\tau \leq t\}\right)=0$.
		It follows that the event $\{\tau \leq t\}$ belongs to  $\mathcal{F}_t^{\zeta^z} \vee \mathcal{N}_P,$ for all $t \geq 0$. Hence $\tau$ is a stopping time with respect to $\mathbb{F}^{\zeta^z,c}$ and consequently it is also a stopping time with respect to $\mathbb{F}^{\zeta^z,c}_{+}$.
	\end{proof}
	
	In order to determine the conditional law of the random time $\tau$ given $(\zeta^z_{t_1},\ldots,\zeta^z_{t_n})$ we will use the following
	
	\begin{proposition}\label{propbayesestimatejusquatn}
		Let $n\in \mathbb{N}^*$ and $0=t_0<t_1<t_2<...<t_n$ such that $F(t_1)>0$.  Let $g:\mathbb{R}_{+}\longrightarrow \mathbb{R} $ be a Borel function satisfying $\mathbb{E}[|g(\tau)|]<+\infty$. Then, $\mathbb{P}$-a.s., we have

		\begin{align}\label{taucondxi}
		\mathbb{E}[g(\tau)\vert\zeta^z_{t_{1}},\ldots,\zeta^z_{t_{n}}]=&  \dint_{(0,t_{1}]}\frac{g(r)}{F(t_{1})}\mathbb{P}_{\tau}(dr)\;\mathbb{I}_{\{\zeta_{t_{1}}=z\}} \nonumber \\ \nonumber
		\\
		 +&\sum\limits _{k=1}^{n-1}\dint_{(t_{k},t_{k+1}]}g(r)\Upsilon_{k}(r,\zeta^z_{t_{k}}) 
		\mathbb{P}_{\tau}(dr)\; \mathbb{I}_{\{\zeta^z_{t_{k}}\neq z,\zeta^z_{t_{k+1}}=z\}} \nonumber  \\ \nonumber
		\\
		+&\dint_{(t_{n},+\infty)}g(r)\Upsilon_{n}(r,\zeta^z_{t_{n}})\mathbb{P}_{\tau}(dr)\; \mathbb{I}_{\{\zeta^z_{t_{n}}\neq z\}}.
		\end{align}
		Here the functions $\Upsilon_{k}$ and $\Upsilon_{n}$ are defined by
		
		\begin{equation}
		\Upsilon_{k}(r,x):=\dfrac{\phi(r,t_k,x)}{\dint_{(t_{k},t_{k+1}]}\phi(r,t_k,x)\mathbb{P}_{\tau}(dr)},\; r\in (t_{k},t_{k+1}], \,\,  x\in \mathbb{R},\, \,k=1,\ldots,n-1,
		\end{equation}
		and
		\begin{equation}
		\Upsilon_{n}(r,x):=\dfrac{\phi(r,t_{n},x)}{\dint_{(t_{n},+\infty)}\phi(r,t_{n},x)\mathbb{P}_{\tau}(dr)}, ~~r\in (t_{n},+\infty), \,\,x\in \mathbb{R}, \label{equationphi}
		\end{equation}
		where 
		\begin{equation}
			\phi(r,t,x):=\dfrac{f_{r-t}(z-x)}{f_r(z)}\mathbb{I}_{\{t<r\}},\,\, r>0,\,\, t>0,\,\, x\in \mathbb{R}. \label{eqphi}
		\end{equation}
		
	\end{proposition}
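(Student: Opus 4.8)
\medskip
The plan is to verify \eqref{taucondxi} directly from the defining property of conditional expectation, using the disintegration of the joint law of $(\tau,\zeta^z_{t_1},\ldots,\zeta^z_{t_n})$ over the values of $\tau$. By assertion (ii) of Definition \ref{defzeta^z}, for every bounded Borel function $\Psi$ on $(0,\infty)\times\mathbb{R}^n$ one has
\[
\mathbb{E}\big[\Psi(\tau,\zeta^z_{t_1},\ldots,\zeta^z_{t_n})\big]=\int_{(0,\infty)}\mathbb{E}\big[\Psi(r,X^{r,z}_{t_1},\ldots,X^{r,z}_{t_n})\big]\,\mathbb{P}_{\tau}(dr).
\]
Since the right-hand side of \eqref{taucondxi} is by construction $\sigma(\zeta^z_{t_1},\ldots,\zeta^z_{t_n})$-measurable and integrable, it suffices to show, for every bounded Borel $\Phi$ on $\mathbb{R}^n$, that $\mathbb{E}[g(\tau)\Phi(\zeta^z_{t_1},\ldots,\zeta^z_{t_n})]$ coincides with the expectation of the right-hand side of \eqref{taucondxi} multiplied by $\Phi(\zeta^z_{t_1},\ldots,\zeta^z_{t_n})$.

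The first step is to split the $r$-integral over $(0,\infty)$ according to the position of $r$ relative to $t_1<\cdots<t_n$. By Remark \ref{ratio}(ii) the bridge is pinned at $z$ from time $r$ onwards, so $X^{r,z}_{t_i}=z$ whenever $t_i\geq r$, while for $t_i<r$ the coordinate $X^{r,z}_{t_i}$ is absolutely continuous. Consequently, for $r\in(t_k,t_{k+1}]$ the law of $(X^{r,z}_{t_1},\ldots,X^{r,z}_{t_n})$ is the product of the finite-dimensional bridge density \eqref{equationfinidimensionaldensity} in the first $k$ coordinates and Dirac masses at $z$ in the last $n-k$ coordinates; for $r\leq t_1$ all coordinates are Dirac masses at $z$, and for $r>t_n$ all coordinates are genuine densities. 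This yields exactly the three groups of terms in \eqref{taucondxi}, and, by \eqref{stpprop}, the events $\{\zeta^z_{t_1}=z\}$, $\{\zeta^z_{t_k}\neq z,\zeta^z_{t_{k+1}}=z\}$ and $\{\zeta^z_{t_n}\neq z\}$ coincide $\mathbb{P}$-a.s. with $\{\tau\leq t_1\}$, $\{t_k<\tau\leq t_{k+1}\}$ and $\{\tau>t_n\}$ respectively, so the indicators in \eqref{taucondxi} line up with these $r$-blocks.

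With this decomposition at hand, the second step is a term-by-term match. For the generic middle block $r\in(t_k,t_{k+1}]$, the contribution to $\mathbb{E}[g(\tau)\Phi(\zeta^z_{t_1},\ldots,\zeta^z_{t_n})]$ equals
\[
\int_{(t_k,t_{k+1}]}g(r)\int_{\mathbb{R}^k}\Phi(x_1,\ldots,x_k,z,\ldots,z)\,\phi(r,t_k,x_k)\prod_{i=1}^{k}f_{t_i-t_{i-1}}(x_i-x_{i-1})\,dx_1\cdots dx_k\,\mathbb{P}_{\tau}(dr),
\]
recalling the notation \eqref{eqphi}. Computing the corresponding contribution of the right-hand side of \eqref{taucondxi} through the same disintegration, the factor $\int_{(t_k,t_{k+1}]}\phi(r,t_k,x_k)\,\mathbb{P}_{\tau}(dr)$ arising from the law of $\zeta^z_{t_k}$ on this block cancels the normalising denominator in the definition of $\Upsilon_k$; after an application of Fubini to exchange the two $r$-integrals, the two contributions agree. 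The boundary blocks are handled identically: on $(0,t_1]$ the normalisation reduces to $F(t_1)$, which explains the hypothesis $F(t_1)>0$, and on $(t_n,\infty)$ one uses $\Upsilon_n$ in place of $\Upsilon_k$. Summing the three matched blocks gives the identity for all bounded $\Phi$, which is \eqref{taucondxi}.

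The step I expect to be most delicate is bookkeeping rather than analysis: one must track the mixed structure carefully---Dirac masses at $z$ in the already-pinned coordinates and genuine densities in the remaining ones---and secure the almost-sure identifications of the partitioning events so that the indicators in \eqref{taucondxi} match the $r$-blocks. One must also verify that the normalising integrals defining $\Upsilon_k$ and $\Upsilon_n$ are strictly positive and finite for $\mathbb{P}$-a.e. value of the conditioning coordinate, so that the formula is well posed; the algebraic cancellation producing each individual term is then routine.
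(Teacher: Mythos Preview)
Your proposal is correct and follows essentially the same route as the paper: both disintegrate the joint law of $(\tau,\zeta^z_{t_1},\ldots,\zeta^z_{t_n})$ over the values of $\tau$, split according to the position of $r$ relative to $t_1<\cdots<t_n$, and exploit that the bridge coordinates are Dirac masses at $z$ once the bridge has been pinned and genuine densities before. The only cosmetic difference is packaging: the paper builds a single $\sigma$-finite dominating measure $\mu$ (a sum of Dirac--Lebesgue products) and applies Bayes' formula as a ratio of the resulting joint density, whereas you verify the defining property of conditional expectation by testing against bounded $\Phi$ and checking the cancellation of the normalising integrals block by block---the underlying computation is the same.
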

		\begin{proof} It follows from the Definition \eqref{defzeta^z} that, for $B_1,B_2,\ldots,B_n \in \mathcal{B}(\mathbb{R})$, we have 
			
			\[
			\begin{array}{c}
			\mathbb{P}\left((\zeta^z_{t_1},\ldots,\zeta^z_{t_n})\in B_1\times \ldots \times B_n\vert \tau=r\right)=\mathbb{P}\left((X_{t_{1}}^{r,z},\ldots, X_{t_{n}}^{r,z})\in B_{1}\times\ldots\times B_{n}\right)= \\ \\\overset{n}{\underset{k=1}{\prod}}\delta_{z}(B_{k})\mathbb{I}_{\{r\leq t_{1}\}}+
			
			\sum\limits _{k=1}^{n-1}\mathbb{P}_{X_{t_{1}}^{r,z},\ldots,X_{t_{k}}^{r,z}}\left(B_{1}\times\ldots\times B_{k}\right)\overset{n}{\underset{j=k+1}{\prod}}\delta_{z}(B_{j})\mathbb{I}_{\{t_{k}<r\leq t_{k+1}\}}\\
			\\
			
			+\mathbb{P}_{X_{t_{1}}^{r,z},\ldots,X_{t_{n}}^{r,z}}\left(B_{1}\times\ldots\times B_{n}\right)\mathbb{I}_{\{t_{n}<r\}}
			\\ \\ =\dint_{B_{1}\times\ldots\times B_{n}}q_{t_{1},\ldots,t_{n}}(r,x_{1},\ldots,x_{n})\mu(dx_{1},\ldots,dx_{n}).
			
			\end{array}
			\]
			Here the function $q_{t_{1},\ldots,t_{n}}$ is a nonnegative and measurable in the $(n+1)$ variables jointly given by 
			\[
			\begin{array}{ll}
			q_{t_{1},\ldots,t_{n}}(r,x_{1},\ldots,x_{n})&= \overset{n}{\underset{k=1}{\prod}}\mathbb{I}_{\{z\}}(x_{k})\; \mathbb{I}_{\{r\leq t_{1}\}}\\
			&+\sum\limits _{k=1}^{n-1}\varphi_{X_{t_{1}}^{r,z},\ldots,X_{t_{k}}^{r,z}}\left(x_{1},\ldots,x_{k}\right)\; \overset{k}{\underset{j=1}{\prod}}\mathbb{I}_{\{x_j\neq z\}}\; \overset{n}{\underset{j=k+1}{\prod}}\mathbb{I}_{\{z\}}(x_{j})\; \mathbb{I}_{\{t_{k}<r\leq t_{k+1}\}}\\
			&+\varphi_{X_{t_{1}}^{r,z},\ldots,X_{t_{n}}^{r,z}}\left(x_{1},\ldots,x_{n}\right)\; \overset{n}{\underset{j=1}{\prod}}\mathbb{I}_{\{x_j \neq z\}}\mathbb{I}_{\{t_{n}<r\}}
			\end{array}
			\]
			and $\mu(dx_{1},dx_{2},\ldots,dx_{n})$ is a $\sigma$-finite measure on $\mathbb{R}^n$ given by
			\[
			\begin{array}{ll}
			\mu(dx_{1},dx_{2},\ldots,dx_{n})&=  \overset{n}{\underset{k=1}{\bigotimes}}\delta_{z}(dx_{k})\\
			 &+\sum\limits _{k=1}^{n-1}\lambda\left(dx_{1},\ldots,dx_{k}\right)\bigotimes\overset{n}{\underset{j=k+1}{\bigotimes}}\delta_{z}(B_{j})\\
			&+\lambda\left(dx_{1},\ldots,dx_{k}\right).
			\end{array}
			\]
			We get from Bayes formula  
			\[
			\mathbb{E}[g(\tau)\vert\zeta^z_{t_{1}},\ldots,\zeta^z_{t_{n}}]=\dfrac{\dint_{(0,+\infty )}g(r)q_{t_{1},\ldots,t_{n}}(r,\zeta^z_{t_{1}},\ldots,\zeta^z_{t_{n}})\mathbb{P}_{\tau}(dr)}{\dint_{(0,+\infty )}q_{t_{1},\ldots,t_{n}}(r,\zeta^z_{t_{1}},\ldots,\zeta^z_{t_{n}})\mathbb{P}_{\tau}(dr)}.
			\]
			By a simple integration we find
			\[
			\begin{array}{l}
			\dint_{(0,+\infty)}g(r)q_{t_{1},\ldots,t_{n}}(r,\zeta^z_{t_{1}},\ldots,\zeta^z_{t_{n}})\mathbb{P}_{\tau}(dr)  =\dint_{(0,t_{1}]}g(r)\mathbb{P}_{\tau}(dr)\mathbb{I}_{\{\zeta^z_{t_{1}}=z,\ldots,\zeta^z_{t_{n}}=z\}}\\
			\\
			+\sum\limits _{k=1}^{n-1}\dint_{(t_{k},t_{k+1}]}g(r)\varphi_{X_{t_{1}}^{r,z},\ldots,X_{t_{k}}^{r,z}}\left(\zeta^z_{t_{1}},\ldots,\zeta^z_{t_{k}}\right)\mathbb{P}_{\tau}(dr)\mathbb{I}_{\{\zeta^z_{t_{1}}\neq z,\ldots,\zeta^z_{t_{k}}\neq z,\zeta^z_{t_{k+1}}=z,\ldots,\zeta^z_{t_{n}}=z\}}\\
			\\
			+\dint_{(t_{n},+\infty)}g(r)\varphi_{X_{t_{1}}^{r,z},\ldots,X_{t_{n}}^{r,z}}\left(\zeta^z_{t_{1}},\ldots,\zeta^z_{t_{n}}\right)\mathbb{P}_{\tau}(dr)\mathbb{I}_{\{\zeta^z_{t_{1}}\neq z,\ldots,\zeta^z_{t_{n}}\neq z\}},
			\end{array}
			\]
			and 
			\[
			\begin{array}{l}
			\dint_{(0,+\infty)}q(r,\zeta^z_{t_{1}},\ldots,\zeta^z_{t_{n}})\mathbb{P}_{\tau}(dr)  =F(t_{1})\mathbb{I}_{\{\zeta^z_{t_{1}}=z,\ldots,\zeta^z_{t_{n}}=z\}}\\
			\\
			+\sum\limits _{k=1}^{n-1}\dint_{(t_{k},t_{k+1}]}\varphi_{X_{t_{1}}^{r,z},\ldots,X_{t_{k}}^{r,z}}\left(\zeta^z_{t_{1}},\ldots,\zeta^z_{t_{k}}\right)\mathbb{P}_{\tau}(dr)\mathbb{I}_{\{\zeta^z_{t_{1}}\neq z,\ldots,\zeta^z_{t_{k}}\neq z,\zeta^z_{t_{k+1}}=z,\ldots,\zeta^z_{t_{n}}=z\}}\\
			\\
			+\dint_{(t_{n},+\infty)}\varphi_{X_{t_{1}}^{r,z},\ldots,X_{t_{n}}^{r,z}}\left(\zeta^z_{t_{1}},\ldots,\zeta^z_{t_{n}}\right)\mathbb{P}_{\tau}(dr)\mathbb{I}_{\{\zeta^z_{t_{1}}\neq z,\ldots,\zeta^z_{t_{n}}\neq z\}}.
			\end{array}
			\]
		Using the fact that $\{\zeta^z_{t_{i}}=z\}\subset\{\zeta^z_{t_{j}}=z\}$ for $j\geq i$, we obtain
			\[
			\begin{array}{ll}
			\mathbb{E}[g(\tau)\vert\zeta^z_{t_{1}},\ldots,\zeta^z_{t_{n}}]&= \dint_{(0,t_{1}]}\frac{g(r)}{F(t_{1})}\mathbb{P}_{\tau}(dr)\mathbb{I}_{\{\zeta^z_{t_{1}}=z\}}\\
			 &+\sum\limits _{k=1}^{n-1}\dint_{(t_{k},t_{k+1}]}g(r)\dfrac{\varphi_{X_{t_{1}}^{r,z},\ldots,X_{t_{k}}^{r,z}}\left(\zeta^z_{t_{1}},\ldots,\zeta^z_{t_{k}}\right)}{\dint_{(t_{k},t_{k+1}]}\varphi_{X_{t_{1}}^{s,z},\ldots,X_{t_{k}}^{s,z}}\left(\zeta_{t_{1}}^z,\ldots,\zeta^z_{t_{k}}\right)\mathbb{P}_{\tau}(ds)}\mathbb{P}_{\tau}(dr)\mathbb{I}_{\{\zeta^z_{t_{k}}\neq z,\zeta^z_{t_{k+1}}=z\}}\\
		 &+\dint_{(t_{n},+\infty)}g(r)\dfrac{\varphi_{X_{t_{1}}^{r,z},\ldots,X_{t_{n}}^{r,z}}\left(\zeta^z_{t_{1}},\ldots,\zeta^z_{t_{n}}\right)}{\dint_{(t_{n},+\infty)}\varphi_{X_{t_{1}}^{s,z},\ldots,X_{t_{n}}^{s,z}}\left(\zeta^z_{t_{1}},\ldots,\zeta^z_{t_{n}}\right)\mathbb{P}_{\tau}(ds)}\mathbb{P}_{\tau}(dr)\mathbb{I}_{\{\zeta^z_{t_{n}}\neq z\}}.
			\end{array}
			\]
			Now for $k=1,...,n-1$ we use \eqref{equationfinidimensionaldensity} to see that
			
			\begin{align*}
			\dfrac{\varphi_{X_{t_{1}}^{r,z},\ldots,X_{t_{k}}^{r,z}}\left(x_{1},\ldots,x_{k}\right)}{\dint_{(t_{k},t_{k+1}]}\varphi_{X_{t_{1}}^{s,z},\ldots,X_{t_{k}}^{s,z}}\left(x_{1},\ldots,x_{k}\right)\mathbb{P}_{\tau}(ds)}&=\dfrac{\dfrac{f_{r-t_{k}}(z-x_{k})}{f_r(z)}}{\dint_{(t_{k},t_{k+1}]}\dfrac{f_{s-t_{k}}(z-x_{k})}{f_s(z)}\mathbb{P}_{\tau}(ds)}\\
			\\
			&=\Upsilon_{k}(r,x_{k}),
			\end{align*}
			
			and
\begin{align*}
			\dfrac{\varphi_{X_{t_{1}}^{r,z},\ldots,X_{t_{n}}^{r,z}}\left(x_{1},\ldots,x_{n}\right)}{\dint_{(t_{n},+\infty)}\varphi_{X_{t_{1}}^{s,z},\ldots,X_{t_{n}}^{s,z}}\left(x_{1},\ldots,x_{n}\right)\mathbb{P}_{\tau}(ds)}
			&=\dfrac{\dfrac{f_{r-t_{n}}(z-x_{n})}{f_r(z)}}{\dint_{(t_,+\infty)}\dfrac{f_{s-t_{n}}(z-x_{n})}{f_s(z)}\mathbb{P}_{\tau}(ds)}\\
			\\
			&=\Upsilon_{n}(r,x_{n}).
			\end{align*}	
			Combining all this leads to the formula \eqref{taucondxi}.
		\end{proof}
			\begin{corollary} The conditional law of the random time $\tau$ given $\zeta^z_{t_{1}},\ldots,\zeta^z_{t_{n}}$ is given by 
					\begin{align}\label{taucondlawmulti}
					\mathbb{P}_{\tau\vert\zeta^z_{t_{1}}=x_{1},\ldots,\zeta^z_{t_{n}}=x_{n}}\left(x_{1},\ldots,x_{n},dr\right)= & \dfrac{1}{F(t_{1})}\,\mathbb{I}_{\{x_{1}=z\}}\,\mathbb{I}_{(0,t_{1}]}(r)\,\mathbb{P}_{\tau}(dr) \nonumber\\
					\nonumber \\
					 +&\sum\limits _{k=1}^{n-1}\Upsilon_{k}(r,x_{k})\,\mathbb{I}_{\{x_{k}\neq z,x_{k+1}=z\}}\,\mathbb{I}_{(t_{k},t_{k+1}]}(r)\,\mathbb{P}_{\tau}(dr) \nonumber\\
					\nonumber \\
					+&\Upsilon_{n}(r,x_{n})\,\mathbb{I}_{\{x_{n}\neq z\}}\,\mathbb{I}_{(t_{n},+\infty)}(r)\,\mathbb{P}_{\tau}(dr).
					\end{align}
			\end{corollary}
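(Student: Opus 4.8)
The plan is to read the claimed conditional law directly off Proposition \ref{propbayesestimatejusquatn}. Formula \eqref{taucondxi} expresses, for every Borel function $g$ with $\mathbb{E}[|g(\tau)|]<\infty$, the conditional expectation $\mathbb{E}[g(\tau)\vert\zeta^z_{t_{1}},\ldots,\zeta^z_{t_{n}}]$ as an integral $\dint_{(0,+\infty)}g(r)\,\kappa(\zeta^z_{t_{1}},\ldots,\zeta^z_{t_{n}},dr)$ against the measure-valued kernel $\kappa$ obtained by substituting the random coordinates $\zeta^z_{t_{i}}$ with deterministic arguments $x_{i}$ in the right-hand side of \eqref{taucondxi}; that kernel is precisely the right-hand side of \eqref{taucondlawmulti}. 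By the defining property of the regular conditional distribution, once $\kappa$ is shown to be a genuine probability kernel, it is the conditional law $\mathbb{P}_{\tau\vert\zeta^z_{t_{1}}=x_{1},\ldots,\zeta^z_{t_{n}}=x_{n}}$, and the statement follows.

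First I would verify that $\kappa(x_{1},\ldots,x_{n},dr)$ is a probability kernel. For each fixed $(x_{1},\ldots,x_{n})$ it is a nonnegative measure absolutely continuous with respect to $\mathbb{P}_{\tau}$, and it is jointly measurable in $(x_{1},\ldots,x_{n},r)$ since $\Upsilon_{k}$ and $\Upsilon_{n}$ are measurable through \eqref{eqphi}. Its total mass equals one for $\mathbb{P}_{(\zeta^z_{t_{1}},\ldots,\zeta^z_{t_{n}})}$-almost every argument: taking $g\equiv 1$ in \eqref{taucondxi}, the first term integrates to $\dint_{(0,t_{1}]}F(t_{1})^{-1}\,\mathbb{P}_{\tau}(dr)=1$, while the very definitions of $\Upsilon_{k}$ and $\Upsilon_{n}$ force $\dint_{(t_{k},t_{k+1}]}\Upsilon_{k}(r,x_{k})\,\mathbb{P}_{\tau}(dr)=1$ and $\dint_{(t_{n},+\infty)}\Upsilon_{n}(r,x_{n})\,\mathbb{P}_{\tau}(dr)=1$. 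Because the random vector $(\zeta^z_{t_{1}},\ldots,\zeta^z_{t_{n}})$ almost surely respects the absorption property $\{\zeta^z_{t_{i}}=z\}\subset\{\zeta^z_{t_{j}}=z\}$ for $j\ge i$ already used in the proof of Proposition \ref{propbayesestimatejusquatn}, exactly one of the indicators $\mathbb{I}_{\{x_{1}=z\}}$, $\mathbb{I}_{\{x_{k}\neq z,\,x_{k+1}=z\}}$ $(1\le k\le n-1)$, $\mathbb{I}_{\{x_{n}\neq z\}}$ is nonzero $\mathbb{P}$-a.s., so the surviving term contributes mass one.

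Finally I would invoke the $\mathbb{P}$-a.s. uniqueness of the regular conditional distribution of $\tau$ given $\sigma(\zeta^z_{t_{1}},\ldots,\zeta^z_{t_{n}})$: a probability kernel $\kappa$ for which $(x_{1},\ldots,x_{n})\mapsto\dint g(r)\,\kappa(x_{1},\ldots,x_{n},dr)$ is a version of $\mathbb{E}[g(\tau)\vert\zeta^z_{t_{1}},\ldots,\zeta^z_{t_{n}}]$ for every bounded Borel $g$ is, by definition, that conditional distribution. Proposition \ref{propbayesestimatejusquatn} supplies exactly this identity, hence \eqref{taucondlawmulti} holds. I do not expect a genuine obstacle here: all the analytic work has already been carried out in Proposition \ref{propbayesestimatejusquatn}, and the only point requiring care is confirming the normalization so that $\kappa$ is a probability measure rather than merely a finite kernel.
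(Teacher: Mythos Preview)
Your proposal is correct and matches the paper's approach: the corollary is stated immediately after Proposition~\ref{propbayesestimatejusquatn} with no separate proof, being treated as a direct reading of formula~\eqref{taucondxi} once one substitutes deterministic arguments for the $\zeta^z_{t_i}$. Your additional verification that the resulting kernel has total mass one (using the absorption property and the normalizations built into the $\Upsilon_k$) is more explicit than what the paper records, but it is exactly the implicit content of the corollary.
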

			\subsection{Markov property with respect the natural filtration}
	Now the main objective is to prove that $\zeta^z$ is a Markov process with respect its natural filtration $\mathbb{F}^{\zeta^z}$. In order to reach this goal, we first need the expression of the conditional law of $\zeta^z_t$, for $t>0$, with respect to the past $n$ values of $\zeta^z$ before time $t$. On a more general note, we give the conditional law of $(\tau,\zeta^z_.)$ with respect to past $n$ values of $\zeta^z$ before time $t$ which is the content of

	\begin{proposition}\label{ncordbayesest}
      Let $n\in \mathbb{N}^*$ and $0=t_0<t_1<t_2<...<t_n<u$ such that $F(t_1)>0$.  Let $\mathfrak{g}$ be a bounded measurable function defined on $(0,+\infty)\times \mathbb{R}$. Then, $\mathbb{P}$-a.s., we have
		\begin{align}
			\mathbb{E}[\mathfrak{g}(\tau,\zeta^z_u)|\zeta^z_{t_1},\ldots,\zeta^z_{t_n}]=& \dint_{(0,t_1]}\frac{\mathfrak{g}(r,z)}{F(t_1)}\mathbb{P}_\tau(dr)\mathbb{I}_{\{\zeta_{t_{1}}=z\}} \nonumber \\
			\nonumber \\	 +&\sum\limits _{k=1}^{n-1}\dint_{(t_{k},t_{k+1}]}\mathfrak{g}(r,z)\Upsilon_{k}(r,\zeta^z_{t_{k}})\mathbb{P}_{\tau}(dr)\mathbb{I}_{\{\zeta^z_{t_{k}}\neq z,\zeta^z_{t_{k+1}}=z\}} \nonumber  \\
			\nonumber \\		 	+& \dint_{(t_{n},u]}\mathfrak{g}(r,z)\Upsilon_{n}(r,\zeta^z_{t_{n}})\mathbb{P}_{\tau}(dr)\mathbb{I}_{\{\zeta^z_{t_{n}}\neq z\}} \nonumber  \\
			\nonumber \\	 +&\dint_{(u,+\infty)}\,\mathfrak{G}_{t_{n},u}(r,\zeta^z_{t_{n}})\Upsilon_{n}(r,\zeta^z_{t_{n}})\,\mathbb{P}_{\tau}(dr)\,\mathbb{I}_{\{\zeta^z_{t_{n}}\neq z\}} .	\label{equationbeyesextensionxi_u,t_n<u}
			\end{align}	
		  Here, for all $0<t<u<r$, the function $\mathfrak{G}_{t,u}$ is defined by
		  \begin{eqnarray}
		  \mathfrak{G}_{t,u}(r,x)&:=& \mathbb{E}[\mathfrak{g}(r,X_{u}^{r,z})|X_{t}^{r,z}=x] \nonumber\\
		  &=& \dint_{\mathbb{R}} \mathfrak{g}(r,y)\dfrac{f_{u-t}(y-x)f_{r-u}(z-y)}{f_{r-t}(z-x)}dy. \label{Gturx}
		  \end{eqnarray}
		  In particular when $\mathfrak{g}$ is defined on $\mathbb{R}$ we have the following 
		
			\begin{align}
			\mathbb{E}[g(\zeta^z_u)|\zeta^z_{t_1},\ldots,\zeta^z_{t_n}]=& \,g(z) \left( \mathbb{I}_{\{\zeta^z_{t_{n}}=z\}} 	+ \dint_{(t_{n},u]}\,\Upsilon_{n}(r,\zeta^z_{t_{n}})\mathbb{P}_{\tau}(dr)\mathbb{I}_{\{\zeta^z_{t_{n}}\neq z\}} \right)\nonumber  \\
			\nonumber \\ +&\dint_{(u,+\infty)}\,K_{t_{n},u}(r,\zeta^z_{t_{n}})\Upsilon_{n}(r,\zeta^z_{t_{n}})\,\mathbb{P}_{\tau}(dr)\,\mathbb{I}_{\{\zeta^z_{t_{n}}\neq z\}}	, \label{equationzeta_u/zeta_tn}
			\end{align}	
		where,  for all $t<u<r$, the function $K_{t,u}$ is given by
		\begin{eqnarray}
		K_{t,u}(r,x)&:=& \mathbb{E}[g(X_{u}^{r,z})|X_{t}^{r,z}=x] \nonumber\\
		&=& \dint_{\mathbb{R}} g(y)\dfrac{f_{u-t}(y-x)f_{r-u}(z-y)}{f_{r-t}(z-x)}dy. \label{Hcondlaw}
		\end{eqnarray}

	\end{proposition}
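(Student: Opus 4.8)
The plan is to reduce the statement to Proposition~\ref{propbayesestimatejusquatn} by conditioning first on the enlarged information generated by $\tau$ together with the past coordinates, and then averaging. Write $\mathcal{G}:=\sigma(\zeta^z_{t_1},\ldots,\zeta^z_{t_n})$ and $\mathcal{H}:=\sigma(\tau)\vee\mathcal{G}$, so that $\mathcal{G}\subseteq\mathcal{H}$ and, by the tower property,
\[
\mathbb{E}[\mathfrak{g}(\tau,\zeta^z_u)\mid\mathcal{G}]=\mathbb{E}\bigl[\mathbb{E}[\mathfrak{g}(\tau,\zeta^z_u)\mid\mathcal{H}]\mid\mathcal{G}\bigr].
\]
I would evaluate the inner conditional expectation explicitly and then integrate the resulting $\mathcal{H}$-measurable expression against the regular conditional law of $\tau$ given $\mathcal{G}$ already obtained in \eqref{taucondlawmulti}.

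For the inner expectation, fix $r>0$ and work on $\{\tau=r\}$. By assertion (ii) of Definition~\ref{defzeta^z} the conditional law of $\zeta^z$ given $\{\tau=r\}$ is the law of the deterministic bridge $X^{r,z}$, hence the inner expectation equals $\mathbb{E}[\mathfrak{g}(r,X_u^{r,z})\mid X_{t_1}^{r,z},\ldots,X_{t_n}^{r,z}]$ evaluated at $r=\tau$. Here the dichotomy $r\leq u$ versus $r>u$ is decisive. If $r\leq u$, the convention of Remark~\ref{ratio}(ii) gives $X_u^{r,z}=z$, so the inner expectation reduces to $\mathfrak{g}(r,z)$. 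If $r>u$, the Markov property of $X^{r,z}$ together with its transition density \eqref{eqtransitionlawofX^{r,z}} collapses the conditioning to the last coordinate and yields exactly $\mathfrak{G}_{t_n,u}(r,\zeta^z_{t_n})$ by the definition \eqref{Gturx}. Therefore
\[
\mathbb{E}[\mathfrak{g}(\tau,\zeta^z_u)\mid\mathcal{H}]=\mathfrak{g}(\tau,z)\,\mathbb{I}_{\{\tau\leq u\}}+\mathfrak{G}_{t_n,u}(\tau,\zeta^z_{t_n})\,\mathbb{I}_{\{\tau>u\}}.
\]

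It then remains to integrate this expression against $\mathbb{P}_{\tau\mid\mathcal{G}}$ from \eqref{taucondlawmulti}; since $\zeta^z_{t_n}$ is $\mathcal{G}$-measurable it may be frozen during the integration. Substituting \eqref{taucondlawmulti} and splitting the range $(t_n,+\infty)$ of the last atom into $(t_n,u]$, where $r\leq u$ and the integrand is $\mathfrak{g}(r,z)$, and $(u,+\infty)$, where $r>u$ and the integrand is $\mathfrak{G}_{t_n,u}(r,\zeta^z_{t_n})$, produces the four terms of \eqref{equationbeyesextensionxi_u,t_n<u}; on the ranges $(0,t_1]$ and $(t_k,t_{k+1}]$ with $k\leq n-1$ one has automatically $r\leq t_n<u$, so only $\mathfrak{g}(r,z)$ occurs there. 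For the particular case \eqref{equationzeta_u/zeta_tn} I would take $\mathfrak{g}(r,y)=g(y)$, so that $\mathfrak{g}(r,z)=g(z)$ and $\mathfrak{G}_{t_n,u}=K_{t_n,u}$; the first three terms then share the factor $g(z)$ and collapse via the partition identity $\mathbb{I}_{\{\zeta^z_{t_1}=z\}}+\sum_{k=1}^{n-1}\mathbb{I}_{\{\zeta^z_{t_k}\neq z,\zeta^z_{t_{k+1}}=z\}}=\mathbb{I}_{\{\zeta^z_{t_n}=z\}}$, which follows from $\{\zeta^z_{t_i}=z\}\subseteq\{\zeta^z_{t_j}=z\}$ for $j\geq i$ together with the normalisations $\int_{(0,t_1]}F(t_1)^{-1}\mathbb{P}_\tau(dr)=1$ and $\int_{(t_k,t_{k+1}]}\Upsilon_k(r,\cdot)\,\mathbb{P}_\tau(dr)=1$. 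This leaves $g(z)\,\mathbb{I}_{\{\zeta^z_{t_n}=z\}}$ together with the surviving $(t_n,u]$-integral, giving \eqref{equationzeta_u/zeta_tn}.

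The main obstacle I anticipate is the rigorous justification of the inner conditional expectation: passing from conditioning the randomised process $\zeta^z$ on $\{\tau=r\}$ and on its past coordinates to conditioning the fixed-length bridge $X^{r,z}$ and invoking its Markov property must be carried out at the level of regular conditional distributions (most conveniently on the realisation $(\tilde\Omega,\tilde{\mathcal{F}},\tilde{\mathbb{P}})$), with attention to joint measurability in $r$ and to the $\mathbb{P}_\tau$-null exceptional sets, so that the pointwise-in-$r$ identity may legitimately be integrated against $\mathbb{P}_{\tau\mid\mathcal{G}}$. Once this is secured, the remaining steps, namely the splitting of $(t_n,+\infty)$ and the collapse through the partition identity, are routine bookkeeping.
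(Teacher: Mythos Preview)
Your proposal is correct and follows essentially the same route as the paper: split according to the position of $\tau$ relative to $u$, use Definition~\ref{defzeta^z}(ii) together with the Markov property of the deterministic bridge $X^{r,z}$ to replace $\mathfrak{g}(\tau,\zeta^z_u)\mathbb{I}_{\{u<\tau\}}$ by $\mathfrak{G}_{t_n,u}(\tau,\zeta^z_{t_n})\mathbb{I}_{\{u<\tau\}}$, and then invoke Proposition~\ref{propbayesestimatejusquatn}/\eqref{taucondlawmulti} for the outer average. The derivation of \eqref{equationzeta_u/zeta_tn} via the partition identity and the normalisations of $\Upsilon_k$ is exactly what the paper does as well.

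The one organisational difference worth noting is that the paper does \emph{not} compute $\mathbb{E}[\mathfrak{g}(\tau,\zeta^z_u)\mid\mathcal{H}]$ as an intermediate object. Instead, for the piece $\{u<\tau\}$ it tests directly against an arbitrary bounded Borel $h(\zeta^z_{t_1},\ldots,\zeta^z_{t_n})$, disintegrates over $\tau$ via Definition~\ref{defzeta^z}(ii), applies the Markov property of $X^{r,z}$ for each fixed $r$, and then reassembles to obtain
\[
\mathbb{E}\bigl[\mathfrak{g}(\tau,\zeta^z_u)\mathbb{I}_{\{u<\tau\}}\,h(\zeta^z_{t_1},\ldots,\zeta^z_{t_n})\bigr]
=\mathbb{E}\bigl[\mathfrak{G}_{t_n,u}(\tau,\zeta^z_{t_n})\mathbb{I}_{\{u<\tau\}}\,h(\zeta^z_{t_1},\ldots,\zeta^z_{t_n})\bigr],
\]
from which Proposition~\ref{propbayesestimatejusquatn} finishes the job. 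This sidesteps precisely the obstacle you flag: one never has to justify that $\mathbb{E}[\,\cdot\mid\sigma(\tau)\vee\mathcal{G}]$ can be computed by freezing $\tau=r$ and conditioning the bridge $X^{r,z}$, because only the one-variable disintegration in $\tau$ (which is the content of Definition~\ref{defzeta^z}) is used. Your tower-property formulation is perfectly valid on the product realisation $(\tilde\Omega,\tilde{\mathcal F},\tilde{\mathbb P})$, but the paper's test-function route is the cleaner way to make the step rigorous without further measure-theoretic overhead.
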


\begin{proof}
	\begin{itemize}
		 
  First we split $\mathbb{E}[\mathfrak{g}(\tau,\zeta^z_{u})\vert\zeta^z_{t_{1}},\ldots,\zeta^z_{t_{n}}]$ as follows
	\[
\begin{array}{ll}
\mathbb{E}[\mathfrak{g}(\tau,\zeta^z_{u})\vert\zeta^z_{t_{1}},\ldots,\zeta^z_{t_{n}}]&= \mathbb{E}[\mathfrak{g}(\tau,z)\mathbb{I}_{\{\tau\leq t_{n}\}}\vert\zeta^z_{t_{1}},\ldots,\zeta^z_{t_{n}}]\\
\\
 &+ \mathbb{E}[\mathfrak{g}(\tau,z)\mathbb{I}_{\{t_{n}<\tau\leq u\}}|\zeta^z_{t_{1}},\ldots,\zeta^z_{t_{n}}]\\
\\
 & +\mathbb{E}[\mathfrak{g}(\tau,\zeta^z_{u})\mathbb{I}_{\{u<\tau\}}|\zeta^z_{t_{1}},\ldots,\zeta^z_{t_{n}}].
\end{array}
\]
We obtain from Proposition \eqref{propbayesestimatejusquatn} that
\[
\begin{array}{ll}
\mathbb{E}[\mathfrak{g}(\tau,z)\mathbb{I}_{\{\tau\leq t_{n}\}}\vert\zeta^z_{t_{1}},\ldots,\zeta^z_{t_{n}}]&= \dint_{(0,t_{1}]}\frac{\mathfrak{g}(r,z)}{F(t_{1})}\mathbb{P}_{\tau}(dr)\mathbb{I}_{\{\zeta^z_{t_{1}}=z\}}\\
\\
 & +\sum\limits _{k=1}^{n-1}\dint_{(t_{k},t_{k+1}]}\mathfrak{g}(r,z)\Upsilon_{k}(r,\zeta^z_{t_{k}})\mathbb{P}_{\tau}(dr)\mathbb{I}_{\{\zeta^z_{t_{k}}\neq  z,\zeta^z_{t_{k+1}}=z\}},
\end{array}
\]
and
\[
\begin{array}{ll}
\mathbb{E}[\mathfrak{g}(\tau,z)\mathbb{I}_{\{t_{n}<\tau\leq u\}}\vert\zeta^z_{t_{1}},\ldots,\zeta^z_{t_{n}}]= & \dint_{(t_{n},u]}\mathfrak{g}(r,z)\Upsilon_{n}(r,\zeta^z_{t_{n}})\mathbb{P}_{\tau}(dr)\mathbb{I}_{\{\zeta^z_{t_{n}}\neq z\}}.\end{array}
\]
Next we show that 
\begin{equation}
\mathbb{E}[\mathfrak{g}(\tau,\zeta^z_{u})\mathbb{I}_{\{u<\tau\}}|\zeta^z_{t_{1}},\ldots,\zeta^z_{t_{n}}]=\dint_{(u,+\infty)}\,\mathfrak{G}_{t_{n},u}(r,\zeta^z_{t_{n}})\Upsilon_{n}(r,\zeta^z_{t_{n}})\,\mathbb{P}_{\tau}(dr)\,\mathbb{I}_{\{\zeta^z_{t_{n}}\neq z\}}.	\label{condeqtauxi}
\end{equation}
Indeed, since $X^{r,z}$  is a Markov process then for a bounded Borel function $h$ we have  
\begin{align*}
\mathbb{E}[\mathfrak{g}(\tau,\zeta^z_{u})\mathbb{I}_{\left\{ u<\tau\right\} }h\left(\zeta^z_{t_{1}},\ldots,\zeta^z_{t_{n}}\right)] & =\dint_{(u,+\infty)}E[\mathfrak{g}(r,X_{u}^{r,z})h(X_{t_{1}}^{r,z},\ldots,X_{t_{n}}^{r,z})]\mathbb{P}_{\tau}(dr)\\
 & =\dint_{(u,+\infty)}\mathbb{E}[\mathbb{E}[\mathfrak{g}(r,X_{u}^{r,z})|X_{t_{n}}^{r,z}]h(X_{t_{1}}^{r,z},\ldots,X_{t_{n}}^{r,z})]\mathbb{P}_{\tau}(dr).
\end{align*}
 Using \eqref{Gturx}, for $t_{n}<u<r$, we get  
\[
\begin{array}{lll}
\mathbb{E}[\mathfrak{g}(\tau,\zeta^z_{u})\mathbb{I}_{\{u<\tau\}}h(\zeta^z_{t_{1}},\ldots,\zeta^z_{t_{n}})] & = & \dint_{(u,+\infty)}\mathbb{E}[\mathfrak{G}_{t_{n},u}(r,X_{t_{n}}^{r,z})h(X_{t_{1}}^{r,z},\ldots,X_{t_{n}}^{r,z})]\mathbb{P}_{\tau}(dr)\\
\\
 & = & \mathbb{E}[\mathfrak{G}_{t_{n},u}(\tau,\zeta^z_{t_{n}})\mathbb{I}_{\{u<\tau\}}h(\zeta^z_{t_{1}},\ldots,\zeta^z_{t_{n}})].
\end{array}
\]
It follows from Proposition \eqref{propbayesestimatejusquatn} that for every $(x_1,\ldots,x_n)\in \mathbb{R}^n$, we have

\begin{align*}
\mathbb{E}[\mathfrak{G}_{t_{n},u}(\tau,\zeta^z_{t_{n}})\mathbb{I}_{\{u<\tau\}}\vert\zeta^z_{t_{1}}=x_1,\ldots,\zeta^z_{t_{n}}=x_n]&=\mathbb{E}[\mathfrak{G}_{t_{n},u}(\tau,x_n)\mathbb{I}_{\{u<\tau\}}\vert\zeta^z_{t_{1}}=x_1,\ldots,\zeta^z_{t_{n}}=x_n]\nonumber\\ &=\dint_{(u,+\infty)}\,\mathfrak{G}_{t_{n},u}(r,x_n)\,\Upsilon_{n}(r,\zeta^z_{t_{n}})\,\mathbb{P}_{\tau}(dr)\,\mathbb{I}_{\{x_n\neq z\}}.
\end{align*}
This induces that
\[
\begin{array}{l}
\mathbb{E}[\mathfrak{g}(\tau,\zeta^z_{u})\mathbb{I}_{\{u<\tau\}}h(\zeta^z_{t_{1}},\ldots,\zeta^z_{t_{n}})]=\\
\\
\mathbb{E}\left(\dint_{(u,+\infty)}\,\mathfrak{G}_{t_{n},u}(r,\zeta^z_{t_{n}})\Upsilon_{n}(r,\zeta^z_{t_{n}})\,\mathbb{P}_{\tau}(dr)\,\mathbb{I}_{\{\zeta^z_{t_{n}}\neq z\}}h(\zeta^z_{t_{1}},\ldots,\zeta^z_{t_{n}})\right).
\end{array}
\]
Hence the formula \eqref{condeqtauxi} is proved and then the proof of the proposition is completed.
\end{itemize}
	To obtain \eqref{equationzeta_u/zeta_tn}, we use \eqref{equationbeyesextensionxi_u,t_n<u} with the facts that $\{\zeta^z_{t_{i}}=z\}\subset\{\zeta^z_{t_{j}}=z\}$ for $j\geq i$ and $$\dint_{(t_{k},t_{k+1}]}\Upsilon_{k}(r,\zeta^z_{t_{k}})\mathbb{P}_{\tau}(dr)=1$$ for $k=1,\ldots,n-1$.
\end{proof}	
\begin{remark}	
We can derive easily two main facts from \eqref{equationzeta_u/zeta_tn}. The first one is
\begin{align}
			\mathbb{E}[g(\zeta^z_u)|\zeta^z_{t_n}]=& \,g(z) \left( \mathbb{I}_{\{\zeta^z_{t_{n}}=z\}} 	+ \dint_{(t_{n},u]}\,\Upsilon_{n}(r,\zeta^z_{t_{n}})\mathbb{P}_{\tau}(dr)\mathbb{I}_{\{\zeta^z_{t_{n}}\neq z\}} \right)\nonumber  \\
			\nonumber \\ +&\dint_{(u,+\infty)}\,K_{t_{n},u}(r,\zeta^z_{t_{n}})\Upsilon_{n}(r,\zeta^z_{t_{n}})\,\mathbb{P}_{\tau}(dr)\,\mathbb{I}_{\{\zeta^z_{t_{n}}\neq z\}}	.\label{cond u/tn}
			\end{align} 
The other thing is 
\begin{equation}\label{condu/t1-tn}
\mathbb{E}[g(\zeta^z_u)|\zeta^z_{t_1},\ldots,\zeta^z_{t_n}]=\mathbb{E}[g(\zeta^z_u)|\zeta^z_{t_n}].
\end{equation}
	
\end{remark}

		Our next task to establish the markov property of $\zeta$ with respect to its natural filtration. 
			
			\begin{theorem}\label{thmxitaumarkov}
				The L\'evy bridge $\zeta^z$ of random length $\tau$  is an $\mathbb{F}^{\zeta^z}$-Markov process with transition law given by
						\begin{align}\label{transprob}
			\mathbb{P}(\zeta_u^z\in dy|\zeta_t^z=x)&=\Bigg[  \mathbb{I}_{\{x=z\}}\mathbb{I}_{\{y=z\}}+\dint_{(t,u]}\,\dfrac{\phi(r,t,x)}{\dint_{(t,+\infty)}\phi(r,t,x)\mathbb{P}_{\tau}(dr)}\,\mathbb{P}_{\tau}(dr)\,\mathbb{I}_{\{x\neq z\}}\mathbb{I}_{\{y=z\}}
 \nonumber \\
		&+f_{u-t}(y-x)\,\dint_{(u,+\infty)}\,\dfrac{\phi(r,u,y)}{\dint_{(t,+\infty)}\phi(r,t,x)\mathbb{P}_{\tau}(dr)}\,\mathbb{P}_{\tau}(dr)\,\mathbb{I}_{\{x\neq z\}}\mathbb{I}_{\{y\neq z\}}
\Bigg]  \varrho(dy),
			\end{align}
				for $0 \leq t <u$, where $\varrho$ is the measure defined on $\mathcal{B}(\mathbb{R})$ by
				\begin{equation}\label{measure}
				 \varrho(dy)=\delta_z(dy)+\lambda (dy).
				\end{equation} 
			\end{theorem}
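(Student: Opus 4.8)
The plan is to first establish the $\mathbb{F}^{\zeta^z}$-Markov property and then read off the transition kernel \eqref{transprob} from the already-proven formula \eqref{cond u/tn}. Fix $0\le t<u$ and a bounded measurable $g$ (the case $t=0$ being trivial since $\zeta^z_0=0$ $\mathbb{P}$-a.s.). The Markov property amounts to $\mathbb{E}[g(\zeta^z_u)\mid\mathcal{F}^{\zeta^z}_t]=\mathbb{E}[g(\zeta^z_u)\mid\zeta^z_t]$ $\mathbb{P}$-a.s. Since $\zeta^z$ has right-continuous paths with left limits, the cylinder functionals $h(\zeta^z_{t_1},\dots,\zeta^z_{t_n})$ with $0<t_1<\dots<t_n=t$ and $h$ bounded measurable form a multiplicative family generating $\mathcal{F}^{\zeta^z}_t$, so by the functional monotone class theorem it suffices to verify $\mathbb{E}[g(\zeta^z_u)\,h(\zeta^z_{t_1},\dots,\zeta^z_{t_n})]=\mathbb{E}\big[\mathbb{E}[g(\zeta^z_u)\mid\zeta^z_t]\,h(\zeta^z_{t_1},\dots,\zeta^z_{t_n})\big]$ for every such test functional.

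For this I would condition on $(\zeta^z_{t_1},\dots,\zeta^z_{t_n})$ on the left and invoke \eqref{condu/t1-tn}, which collapses $\mathbb{E}[g(\zeta^z_u)\mid\zeta^z_{t_1},\dots,\zeta^z_{t_n}]$ to $\mathbb{E}[g(\zeta^z_u)\mid\zeta^z_{t_n}]=\mathbb{E}[g(\zeta^z_u)\mid\zeta^z_t]$; pulling this $\sigma(\zeta^z_t)$-measurable factor out yields the required identity. The one point needing care is that \eqref{condu/t1-tn} rests on Proposition \ref{ncordbayesest}, hence on the standing hypothesis $F(t_1)>0$. When $F(t_1)=0$ the leading term is harmless: the stopping-time property \eqref{stpprop} forces $\mathbb{P}(\zeta^z_{t_1}=z)=F(t_1)=0$, so $\{\zeta^z_{t_1}=z\}$ is $\mathbb{P}$-null and may be discarded, while the remaining terms of Proposition \ref{propbayesestimatejusquatn} carry no $1/F(t_1)$ factor; thus \eqref{condu/t1-tn} survives and the monotone class conclusion extends to all of $\mathcal{F}^{\zeta^z}_t$. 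Since $\mathbb{E}[g(\zeta^z_u)\mid\zeta^z_t]$ is $\sigma(\zeta^z_t)\subseteq\mathcal{F}^{\zeta^z}_t$-measurable, this proves the Markov property.

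It then remains to match \eqref{cond u/tn}, specialized to $n=1$, $t_n=t$, $x=\zeta^z_t$, with $\int g(y)\,\mathbb{P}(\zeta^z_u\in dy\mid\zeta^z_t=x)$ for the kernel \eqref{transprob}. Reading off $\Upsilon_n=\Upsilon_1$ from \eqref{equationphi} with denominator $\int_{(t,+\infty)}\phi(r,t,x)\mathbb{P}_{\tau}(dr)$, the first two summands of \eqref{cond u/tn} integrate $g$ against the $\delta_z$-part of $\varrho$ and give the two bracketed terms carrying $\mathbb{I}_{\{y=z\}}$. For the third summand I would use the algebraic identity, valid for $t<u<r$ and coming from \eqref{Hcondlaw} and \eqref{eqphi},
\[
K_{t,u}(r,x)\,\phi(r,t,x)=\int_{\mathbb{R}} g(y)\,f_{u-t}(y-x)\,\phi(r,u,y)\,dy,
\]
since $\frac{f_{r-t}(z-x)}{f_r(z)}\cdot\frac{f_{r-u}(z-y)}{f_{r-t}(z-x)}=\frac{f_{r-u}(z-y)}{f_r(z)}=\phi(r,u,y)$. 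Dividing by the common denominator of $\Upsilon_1$, integrating over $r\in(u,+\infty)$ against $\mathbb{P}_{\tau}$, and applying Tonelli to exchange the $dy$- and $\mathbb{P}_{\tau}(dr)$-integrations produces exactly the third bracketed term (carrying $\mathbb{I}_{\{y\neq z\}}$), integrated against the Lebesgue part $\lambda(dy)$ of $\varrho$. Collecting the three pieces identifies the kernel and gives \eqref{transprob}.

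The conceptual heart --- that conditioning on the whole past reduces to conditioning on the present --- is already contained in \eqref{condu/t1-tn}, so the genuine work is twofold: the monotone class passage from finitely many coordinates to the full, uncountably generated filtration $\mathcal{F}^{\zeta^z}_t$ (with the attendant bookkeeping when $F(t_1)=0$), and the Tonelli/Chapman-Kolmogorov manipulation turning the $K_{t,u}\Upsilon$-integral into $f_{u-t}(y-x)$ times the $\mathbb{P}_{\tau}$-weight. I expect the measure-theoretic passage to the full filtration to be the main obstacle, since it requires confirming that the finite-dimensional cylinders form a generating $\pi$-system for $\mathcal{F}^{\zeta^z}_t$ and that the exceptional null sets are controlled.
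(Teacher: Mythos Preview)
Your proposal is correct and follows essentially the same route as the paper: both reduce the Markov property to the already-established identity \eqref{condu/t1-tn} and then read the transition kernel off from \eqref{cond u/tn} with $t_n=t$. The only cosmetic difference is that the paper invokes Theorem~1.3 of Blumenthal--Getoor \cite{BG} for the passage from finite-dimensional conditioning to $\mathcal{F}^{\zeta^z}_t$, whereas you spell out the monotone class argument directly; your treatment of the case $F(t_1)=0$ via \eqref{stpprop} is in fact more explicit than the paper's.
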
 	
	\begin{proof}
		We need to prove that for all $0\leq t <u$, and all measurable functions $g$ such that $g(\zeta_u^z)$ is integrable, we have, $\mathbb{P}$-a.s,
		 $$\mathbb{E}[g(\zeta^z_u)\vert \mathcal{F}^{\zeta^z}_{t} ]=\mathbb{E}[g(\zeta^z_u)\vert \zeta^z_{t}]. $$
		First, we would like to mention that since $\zeta^z_{0}=0$ almost surely then it is easy to see that 
		$$\mathbb{E}[g(\zeta^z_u)\vert \mathcal{F}^{\zeta^z}_{0} ]=\mathbb{E}[g(\zeta^z_u)\vert \zeta^z_{0}].$$	
		Using Theorem 1.3 in Blumenthal and Getoor \cite{BG} it suffices to prove that for all $n\in \mathbb{N}$ and
		$0 < t_1 < t_2 <\ldots < t_n < u$, we have $\mathbb{P}$-a.s.,
		\begin{equation}
			\mathbb{E}[g(\zeta^z_u)|\zeta^z_{t_1},\ldots,\zeta^z_{t_n}]=\mathbb{E}[g(\zeta^z_u)|\zeta^z_{t_n}]
		\end{equation}
		which is none other than \eqref{condu/t1-tn}.
%		By combining the facts that $\{\zeta^z_{t_{i}}=z\}\subset\{\zeta^z_{t_{j}}=z\}$ for $j\geq i$, $\dint_{(t_{k},t_{k+1}]}\Upsilon_{k}(r,\zeta^z_{t_{k}})\mathbb{P}_{\tau}(dr)=1$ for $k=1,\ldots,n-1$ we obtain 
%			\begin{align}
%			\mathbb{E}[g(\zeta^z_u)|\zeta^z_{t_1},\ldots,\zeta^z_{t_n}]=& \,g(z) \left( \mathbb{I}_{\{\zeta^z_{t_{n}}=z\}} 	+ \dint_{(t_{n},u]}\,\Upsilon_{n}(r,\zeta^z_{t_{n}})\mathbb{P}_{\tau}(dr)\mathbb{I}_{\{\zeta^z_{t_{n}}\neq z\}} \right)\nonumber  \\
%			\nonumber \\ +&\dint_{(u,+\infty)}\,K_{t_{n},u}(r,\zeta^z_{t_{n}})\Upsilon_{n}(r,\zeta^z_{t_{n}})\,\mathbb{P}_{\tau}(dr)\,\mathbb{I}_{\{\zeta^z_{t_{n}}\neq z\}}	\nonumber  \\
%			\nonumber \\ = &\,\mathbb{E}[g(\zeta^z_u)|\zeta^z_{t_n}], \label{equationzeta_u/zeta_tn}
%			\end{align}	
%		where,  for all $t<u<r$, the function $K_{t,u}$ is defined by
%		\begin{eqnarray}
%		K_{t,u}(r,x)&:=& \mathbb{E}[g(X_{u}^{r,z})|X_{t}^{r,z}=x] \nonumber\\
%		&=& \dint_{\mathbb{R}} g(y)\dfrac{f_{u-t}(y-x)f_{r-u}(z-y)}{f_{r-t}(z-x)}dy. \label{Hcondlaw}
%		\end{eqnarray}
Now it is sufficient to write down \eqref{equationphi} and \eqref{cond u/tn} with $t$ instead of $t_n$ to conclude that, for all $0<t<u$, 

\begin{align}\label{condu/t}
\mathbb{E}[g(\zeta_{u}^{z})|\zeta_{t}^{z}] = &\,  g(z)\left(\mathbb{I}_{\{\zeta_{t}^{z}=z\}}+\dint_{(t,u]}\,\dfrac{\phi(r,t,\zeta_{t}^{z})}{\dint_{(t,+\infty)}\phi(r,t,\zeta_{t}^{z})\mathbb{P}_{\tau}(dr)}\mathbb{P}_{\tau}(dr)\mathbb{I}_{\{\zeta_{t}^{z}\neq z\}}\right) \nonumber\\
\nonumber\\
 &   +\dint_{\mathbb{R}}g(y)\,f_{u-t}(y-\zeta_{t}^{z})\,\int_{(u,+\infty)}\,\dfrac{\phi(r,u,y)}{\dint_{(t,+\infty)}\phi(r,t,\zeta_{t}^{z})\mathbb{P}_{\tau}(dr)}\,\mathbb{P}_{\tau}(dr)\,dy\,\mathbb{I}_{\{\zeta_{t}^{z}\neq z\}}.
\end{align}
Thereafter the conditional law of $\zeta^z_u$ given $\zeta^z_t$ is given by
			\begin{align*}
			\mathbb{P}(\zeta_u^z\in dy|\zeta_t^z=x)&=\Bigg[  \mathbb{I}_{\{x=z\}}\mathbb{I}_{\{y=z\}}+\dint_{(t,u]}\,\dfrac{\phi(r,t,x)}{\dint_{(t,+\infty)}\phi(r,t,x)\mathbb{P}_{\tau}(dr)}\,\mathbb{P}_{\tau}(dr)\,\mathbb{I}_{\{x\neq z\}}\mathbb{I}_{\{y=z\}}
\\
		&+f_{u-t}(y-x)\,\dint_{(u,+\infty)}\,\dfrac{\phi(r,u,y)}{\dint_{(t,+\infty)}\phi(r,t,x)\mathbb{P}_{\tau}(dr)}\,\mathbb{P}_{\tau}(dr)\,\mathbb{I}_{\{x\neq z\}}\mathbb{I}_{\{y\neq z\}}
\Bigg]  \varrho(dy),
			\end{align*}
		where $\varrho$ is the measure given by \eqref{measure}.		
	\end{proof}
	
	\begin{remark}
		\begin{enumerate}
			\item[(i)] 	It is not hard to see that the Markov property can be extended to the completed filtration  $\mathbb{F}^{\zeta^z,c}$.
			\item[(ii)] It is clear that the transition density $ \mathbb{P}(\zeta_u^z\in dy|\zeta_t^z=x)$ doesn't depend only on $u - t$,	then the process
			$\zeta^z$ cannot be an homogeneous $\mathbb{F}^{\zeta^z}$-Markov process. 
		\end{enumerate}	
	\end{remark}
In the next proposition we give the conditional law of the random time $\tau$ with respect to the
past of the process $\zeta^z$ up to time $t$.
\begin{proposition}\label{propbayesestimate}
	Let $t>0$. For each Borel function difined on $(0, \infty)$ such that $\mathbb{E}[|g(\tau)|]<+\infty$ we have $\mathbb{P}$-a.s.
	\begin{equation}
	\mathbb{E}[g(\tau)\vert  \mathcal{F}_{t}^{\zeta^z,c}]=g(\tau\wedge t)\mathbb{I}_{\{\zeta^z_t=z\}}+\dint_{(t,+\infty)}g(r)\,\dfrac{\phi(r,t,\zeta_{t}^{z})}{\dint_{(t,+\infty)}\phi(r,t,\zeta_{t}^{z})\mathbb{P}_{\tau}(dr)}\,\mathbb{P}_{\tau}(dr)\;\mathbb{I}_{\{\zeta_{t}^{z}\neq z\}}.\label{equationtaugivenF}
	\end{equation}
\end{proposition}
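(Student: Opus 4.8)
The plan is to split $g(\tau)=g(\tau)\mathbb{I}_{\{\tau\le t\}}+g(\tau)\mathbb{I}_{\{\tau>t\}}$ and to handle the two pieces separately. For the first piece I would argue by measurability. Since $\tau$ is an $\mathbb{F}^{\zeta^z,c}$-stopping time by the previous proposition, the truncation $\tau\wedge t$ is $\mathcal{F}^{\zeta^z,c}_t$-measurable, whence so is $g(\tau\wedge t)$; moreover \eqref{stpprop} shows that $\{\tau\le t\}$ and $\{\zeta^z_t=z\}$ coincide up to a $\mathbb{P}$-null set, so both belong to $\mathcal{F}^{\zeta^z,c}_t$. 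As $\tau\wedge t=\tau$ on $\{\tau\le t\}$, the variable $g(\tau\wedge t)\,\mathbb{I}_{\{\zeta^z_t=z\}}$ is $\mathcal{F}^{\zeta^z,c}_t$-measurable and equals $g(\tau)\mathbb{I}_{\{\tau\le t\}}$ almost surely, so conditioning leaves it unchanged and yields the first term of \eqref{equationtaugivenF}. It is precisely here that passing to the completed filtration is indispensable, because the identification of $\{\tau\le t\}$ with $\{\zeta^z_t=z\}$ holds only modulo null sets.

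For the second piece I would first condition on finitely many coordinates and then pass to the limit. Choose partitions $0<t_1<\dots<t_n=t$ with $F(t_1)>0$, mesh tending to $0$, and points eventually dense in $(0,t]$, and set $\mathcal{G}_n=\sigma(\zeta^z_{t_1},\dots,\zeta^z_{t_n})$. Applying Proposition \ref{propbayesestimatejusquatn} to the integrable function $r\mapsto g(r)\mathbb{I}_{\{r>t\}}$ and using $t_n=t$, all terms carried by $(0,t]$ (the first term and the whole middle sum) vanish and only the last one remains, so that
\begin{equation*}
\mathbb{E}\!\left[g(\tau)\mathbb{I}_{\{\tau>t\}}\,\big|\,\mathcal{G}_n\right]=\dint_{(t,+\infty)}g(r)\,\frac{\phi(r,t,\zeta^z_t)}{\dint_{(t,+\infty)}\phi(s,t,\zeta^z_t)\,\mathbb{P}_\tau(ds)}\,\mathbb{P}_\tau(dr)\;\mathbb{I}_{\{\zeta^z_t\ne z\}}.
\end{equation*}
The right-hand side depends only on $\zeta^z_t$ and is in particular independent of $n$.

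Since $\zeta^z$ is c\`adl\`ag, its restriction to $[0,t]$ is determined by its values on a countable dense subset of $(0,t]$ together with $\zeta^z_t$, so $\mathcal{G}_n\uparrow\mathcal{F}^{\zeta^z}_t$. L\'evy's upward martingale convergence theorem then forces the common value of the conditional expectations above to equal $\mathbb{E}[g(\tau)\mathbb{I}_{\{\tau>t\}}\mid\mathcal{F}^{\zeta^z}_t]$, and replacing $\mathcal{F}^{\zeta^z}_t$ by its completion $\mathcal{F}^{\zeta^z,c}_t$ changes nothing. Adding this to the first contribution gives exactly \eqref{equationtaugivenF}.

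The step demanding the most care is the passage from the $\mathcal{G}_n$ to $\mathcal{F}^{\zeta^z}_t$: one must verify that these $\sigma$-algebras really increase to the full natural $\sigma$-algebra at time $t$, which rests on c\`adl\`ag regularity, and one must be able to choose, at every stage of the refinement, a first grid point $t_1$ with $F(t_1)>0$ so that Proposition \ref{propbayesestimatejusquatn} is applicable. This second requirement is tied to the behaviour of the law of $\tau$ near the origin and is the only genuinely delicate point; everything else reduces to the bookkeeping already performed in Proposition \ref{propbayesestimatejusquatn}.
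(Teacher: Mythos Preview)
Your proof is correct, and the approach genuinely differs from the paper's. For the second summand the paper does not pass through a martingale limit at all: having just established the Markov property of $\zeta^z$ with respect to $\mathbb{F}^{\zeta^z,c}$ (Theorem~\ref{thmxitaumarkov}), it observes that $g(\tau\vee t)\mathbb{I}_{\{t<\tau\}}$ is $\sigma(\zeta^z_s,\,s\ge t)\vee\mathcal{N}_P$-measurable, so the Markov property immediately gives $\mathbb{E}[g(\tau\vee t)\mathbb{I}_{\{t<\tau\}}\mid\mathcal{F}^{\zeta^z,c}_t]=\mathbb{E}[g(\tau\vee t)\mathbb{I}_{\{t<\tau\}}\mid\zeta^z_t]$, after which a single application of Proposition~\ref{propbayesestimatejusquatn} with $n=1$ and $t_1=t$ finishes. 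Your route avoids invoking the Markov theorem and instead squeezes the same conclusion out of Proposition~\ref{propbayesestimatejusquatn} combined with L\'evy's upward theorem; in effect you are re-deriving the relevant instance of the Markov property by hand. The paper's argument is shorter because the heavy lifting has already been done, while yours is more self-contained.

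One comment on the point you flag as ``genuinely delicate'': the hypothesis $F(t_1)>0$ in Proposition~\ref{propbayesestimatejusquatn} is there only to make the first term $\frac{1}{F(t_1)}\int_{(0,t_1]}\cdots$ well defined. You are applying that proposition to $r\mapsto g(r)\mathbb{I}_{\{r>t\}}$, which vanishes on $(0,t_1]$, so the first term is identically zero and no division by $F(t_1)$ ever occurs; a glance at the Bayes computation in the proof of Proposition~\ref{propbayesestimatejusquatn} confirms that the last term you need is valid without this hypothesis (equivalently, when $F(t_1)=0$ the event $\{\zeta^z_{t_1}=z\}$ is $\mathbb{P}$-null by \eqref{stpprop}). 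Hence you may let $t_1\downarrow 0$ freely along your refining partitions, and the passage $\mathcal{G}_n\uparrow\mathcal{F}^{\zeta^z}_t$ goes through without restriction on the law of $\tau$ near the origin.
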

\begin{proof}
		Obviously, we have
		$$\mathbb{E}[g(\tau)| \mathcal{F}_{t}^{\zeta^z,c}]=\mathbb{E}[g(\tau\wedge t)\mathbb{I}_{\{\tau\leqslant t\}}|\mathcal{F}_{t}^{\zeta^z,c}]+\mathbb{E}[g(\tau\vee t)\mathbb{I}_{\{ t<\tau \}}|\mathcal{F}_{t}^{\zeta^z,c}].$$
		Now since  $g(\tau\wedge t)\mathbb{I}_{\{\tau\leqslant t\}}$ is $ \mathcal{F}^{\zeta^z,c}_{t}$-measurable then, $\mathbb{P}$-a.s, one has
		\[
		\begin{array}{lll}
		\mathbb{E}[g(\tau\wedge t)\mathbb{I}_{\{\tau\leqslant t\}}\vert\mathcal{F}_{t}^{\zeta^z,c}] & = & g(\tau\wedge t)\mathbb{I}_{\{\tau\leqslant t\}}\\
		\\
		& = & g(\tau\wedge t)\mathbb{I}_{\{\zeta^z_{t}=z\}}.
		\end{array}
		\]
		On the other hand it follows from the facts that $\zeta^z$ is a Markov process with respect to its completed natural filtration and $g(\tau\vee t)\mathbb{I}_{\{t< \tau \}}$ is  $\sigma(\zeta^z_s, t \leq s \leq +\infty) \vee \mathcal{N}_P$-measurable that $\mathbb{P}$-a.s.
		$$\mathbb{E}[g(\tau\vee t)\mathbb{I}_{\{ t<\tau \}}\vert \mathcal{F}_{t}^{\zeta^z,c}]=\mathbb{E}[g(\tau\vee t)\mathbb{I}_{\{ t<\tau \}}|\zeta^z_{t}].$$
		Thus the result is deduced from \eqref{taucondxi} with $t$ instead of $t_n$.
\end{proof}
 \begin{remark}
 	Analogously, using the formula \eqref{equationbeyesextensionxi_u,t_n<u} and the Markov property of $\zeta^z$, we obtain $\mathbb{P}$-a.s.
 		 \begin{multline}
 		\mathbb{E}[g(\tau,\zeta^z_{u})|\mathcal{F}_{t}^{\zeta^z,c}] =g(\tau\wedge t,z)\mathbb{I}_{\{\zeta^z_t=z\}}+\dint_{(t,u]}\,g(r,z)\,\dfrac{\phi(r,t,\zeta_{t}^{z})}{\dint_{(t,+\infty)}\phi(r,t,\zeta_{t}^{z})\mathbb{P}_{\tau}(dr)}\,\mathbb{P}_{\tau}(dr)\;\mathbb{I}_{\{\zeta_{t}^{z}\neq z\}}
\\
 		+\dint_{\mathbb{R}}\,\dint_{(u,+\infty)}\,g(r,y)\,f_{u-t}(y-\zeta_{t}^{z})\,\,\dfrac{\phi(r,u,y)}{\dint_{(t,+\infty)}\phi(r,t,\zeta_{t}^{z})\mathbb{P}_{\tau}(dr)}\,\mathbb{P}_{\tau}(dr)\,dy\,\mathbb{I}_{\{\zeta_{t}^{z}\neq z\}}
, \label{equationtauzetaugivenF}
 		\end{multline}
 for each bounded measurable function $g$ defined on $(0,+\infty)\times \mathbb{R}$ and $0<t<u$.
 \end{remark}
\subsection{The right continuity of the filtration $\mathbb{F}^{\zeta^z,c}_+$}	\label{sectionmarkovpropertyfiltrationcontinue}
We have established, in the previous section, the Markov property of $\zeta^z$ with respect to its completed natural filtration $\mathbb{F}^{\zeta^z,c}$. In this section we are interested in the right-continuity of its completed natural filtration $\mathbb{F}^{\zeta^z,c}$. 
We will need to impose the following condition on the transition density
$f_{t}$ of $X_{t}$:

\begin{hy}\label{behavioursmall}
For any $\delta>0$ there exists some $\hat{t}:=\hat{t}\left(\delta\right)>0$ such that $$C_{\delta}=\underset{0<t<\hat{t}}{\sup}\,\underset{\vert x\vert>\delta}{\sup}\,\vert f_{t}(x)\vert<+\infty.$$ 
\end{hy}
We will shed some light on this assumption. Clearly this condition is closely related to the behavior of the transition density of the Lévy process in a short time. Estimates of the transition density of Lévy processes have been the subject of a number of investigations. Léandre \cite{Lea87} and Ishikawa \cite{Ish} were the first to show that the transition density behaves like a power of $t$ when $t$ goes to $0$, which leads to this assumption. To learn more about this condition we refer to the paper of Figueroa-L$\acute{\text{o}}$pez et al \cite{FH}.
 
In order to be able to state the main result we need the following auxiliary result:
\begin{lemma}\label{lemmatransitionfunctionrigthcontinuous}	Let $u$ be a strictly positive real number and $g$ be a bounded continuous function. Assume that assumption \eqref{behavioursmall} holds. Then, $\mathbb{P}$-a.s., we have
	\begin{enumerate}
		\item[(i)] The function $t\longmapsto \mathbb{E}[g(\zeta^z_u)|\zeta^z_t]$ is right-continuous on $]0,u]$.
		\item[(ii)] If, in addition, $\mathbb{P}(\tau>\varepsilon)=1$ for some $\varepsilon>0$. Then the function $t\longmapsto \mathbb{E}[g(\zeta^z_u)|\zeta^z_t]$ is right-continuous at $0$.
	\end{enumerate}
\end{lemma}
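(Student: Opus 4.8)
The plan is to read off from the explicit formula \eqref{condu/t} that $\mathbb{E}[g(\zeta^z_u)\mid\zeta^z_t]$ is a fixed deterministic function of the pair $(t,\zeta^z_t)$, and then to use that $t\mapsto\zeta^z_t$ is right-continuous (its paths having left limits) to transfer continuity of that function into right-continuity of $t\mapsto\mathbb{E}[g(\zeta^z_u)\mid\zeta^z_t]$. Writing $D(t,x):=\dint_{(t,+\infty)}\phi(r,t,x)\,\mathbb{P}_{\tau}(dr)$ for the normalising denominator appearing in \eqref{condu/t}, I first fix $t\in\,]0,u]$ and let $s\downarrow t$, splitting the probability space by means of the stopping-time identity \eqref{stpprop}, which gives $\{\zeta^z_t=z\}=\{\tau\le t\}$ up to a $\mathbb{P}$-null set.

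On the event $\{\tau\le t\}$ the process is frozen at $z$ for all times $\geq t$, so $\mathbb{E}[g(\zeta^z_u)\mid\zeta^z_s]\equiv g(z)$ on a right-neighbourhood of $t$ and right-continuity is trivial. The work is on $\{\tau>t\}$, where $\zeta^z_t\neq z$ almost surely, hence $\zeta^z_s\neq z$ for all $s$ in a (random) right-neighbourhood of $t$ and only the second-branch expression in \eqref{condu/t} is active. There it suffices to prove that this expression, viewed as a map $(s,x)$, is continuous at $(t,\zeta^z_t)$ for $\zeta^z_t\neq z$. Since $\phi(r,s,x)=f_{r-s}(z-x)\,f_r(z)^{-1}\mathbb{I}_{\{s<r\}}$ by \eqref{eqphi} and $(s,x)\mapsto f_s(x)$ is jointly continuous on $(0,+\infty)\times\mathbb{R}$, all integrands converge pointwise, so the whole question reduces to (a) justifying passage to the limit under the $r$- and $y$-integrals, and (b) checking that $D(s,x)$ stays bounded away from $0$ and $+\infty$.

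The dominated-convergence step in (a) is the crux, and this is exactly where Assumption \ref{behavioursmall} enters. The only region where joint continuity of $f$ is useless is $r\downarrow s$, where $r-s\to0$; but on $\{\tau>t\}$ the argument $x$ stays in a neighbourhood of $\zeta^z_t\neq z$, so $|z-x|>\delta$ for some fixed $\delta>0$, and Assumption \ref{behavioursmall} supplies the uniform bound $f_{r-s}(z-x)\le C_{\delta}$ once $r-s<\hat t(\delta)$. For $r$ bounded away from $s$ I would dominate $f_{r-s}(z-x)\le f_{r-s}(0)=(2\pi)^{-1}\|\exp((r-s)\psi)\|_{L^1}$ (valid since $\psi$ is real, cf.\ \eqref{fzero}), which is locally bounded, while the continuity and strict positivity of $r\mapsto f_r(z)$ near $r=t>0$ (Definition \ref{defzeta^z}(i)) bound $f_r(z)^{-1}$; for large $r$ the ratio $f_{r-s}(0)/f_r(z)$ stays bounded by the asymptotics of Proposition \ref{propschilling} and Corollary \ref{corschilling}\,\eqref{eqrmkschilling}. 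Together with $\dint_{\mathbb{R}}f_{u-s}(y-x)\,dy=1$ these estimates furnish a single $\mathbb{P}_{\tau}$-integrable majorant, uniform for $(s,x)$ near $(t,\zeta^z_t)$, so dominated convergence applies to numerator and denominator simultaneously; the limiting denominator $D(t,\zeta^z_t)$ is a strictly positive normalising constant, which yields (b) and hence the continuity of the ratio. This proves (i).

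For (ii) the single new obstruction at $t=0$ is that, as $s\downarrow0$, mass of $\mathbb{P}_{\tau}$ near the origin would force $r-s\to0$ while simultaneously $f_r(z)\to0$, so that $f_r(z)^{-1}$ can no longer be controlled and the majorant above collapses. The extra hypothesis $\mathbb{P}(\tau>\varepsilon)=1$ removes precisely this: every $r$-integral is then carried over $(\varepsilon,+\infty)$, so $r-s\ge\varepsilon-s$ is bounded below and $f_r(z)$ is bounded away from $0$ there, eliminating any small-time blow-up. With $\zeta^z_0=0$ and $\zeta^z_s\to0$, the same dominated-convergence argument---now needing only $f_{r-s}(z-x)\le f_{r-s}(0)$ and the joint continuity of $f$---gives convergence of the second-branch expression in \eqref{condu/t} to its value at $(0,0)$, namely $\mathbb{E}[g(\zeta^z_u)]$, which is the asserted right-continuity at $0$. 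I expect the delicate bookkeeping of the dominating function across the three regimes ($r\approx s$, $r$ moderate, and $r\to\infty$) to be the main technical obstacle; everything else is the continuity of $f$ together with the stopping-time identity.
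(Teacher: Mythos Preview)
Your approach is essentially the paper's: both split via \eqref{stpprop}, read off from the explicit formula \eqref{condu/t}, and obtain a $\mathbb{P}_\tau$-integrable dominating function for the $r$-integral by combining the joint continuity of $f$, Assumption \ref{behavioursmall} for the regime $r\downarrow s$, and Corollary \ref{corschilling} for the regime $r\to\infty$; part (ii) is handled identically in both by observing that $\mathbb{P}(\tau>\varepsilon)=1$ confines the $r$-integration to $(\varepsilon,\infty)$, where no small-time control is needed.

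The one place where the paper is sharper than your sketch is the inner $y$-integral in the second term of \eqref{condu/t}. You propose to dominate in $(r,y)$ jointly, invoking $\int_{\mathbb{R}}f_{u-s}(y-x)\,dy=1$; but the only pointwise bound on $f_{u-s}(y-x)$ uniform in $y$ is $f_{u-s}(0)$, a constant in $y$, which is not $dy$-integrable over $\mathbb{R}$, so no product majorant exists and a direct dominated-convergence argument on the double integral fails. The paper instead writes this term as $\int_{(u,\infty)}K_{t_n,u}(r,\zeta^z_{t_n})\,\Upsilon_n(r,\zeta^z_{t_n})\,\mathbb{P}_\tau(dr)$ with $K$ as in \eqref{Hcondlaw}, bounds $|K_{t_n,u}|\le\|g\|_\infty$ (the $y$-integrand being a probability density), and obtains the pointwise-in-$r$ convergence $K_{t_n,u}(r,\zeta^z_{t_n})\to K_{t,u}(r,\zeta^z_t)$ by Scheff\'e's lemma, since the transition densities in \eqref{eqtransitionlawofX^{r,z}} form a pointwise-convergent sequence of probability densities. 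With Scheff\'e inserted at that step, the remainder of your sketch coincides with the paper's proof.
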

\begin{proof}
(i)	Let $t\in]0,u]$ and $(t_{n})_{n\in \mathbb{N}}$
	be a decreasing sequence of real numbers converging to $t$. Our goal is to show that 
	\begin{equation}
	\lim\limits_{n\longmapsto+\infty}\mathbb{E}[g(\zeta^z_{u})\vert\zeta^z_{t_n}]=\mathbb{E}[g(\zeta^z_{u})\vert\zeta^z_{t}].\label{eqmarkovlimitzeta}
	\end{equation}
	It follows from \eqref{stpprop} and \eqref{cond u/tn} that $\mathbb{P}$-a.s.
	
%	\begin{align}
%	\mathbb{E}[g(\zeta^z_u)|\zeta^z_{t_n}]=& \,g(z) \left( \mathbb{I}_{\{\tau \leq t_{n}\} }	+ \dint_{(t_{n},u]}\,\psi_{t_n,r}(z,\zeta^z_{t_{n}})\mathbb{P}_{\tau}(dr)\mathbb{I}_{\{ t_{n}<\tau\} } \right)\nonumber  \\
%			\nonumber \\ +&\dint_{(u,+\infty)}\,K_{t_{n},u}(r,\zeta^z_{t_{n}})\psi_{t_n,r}(z,\zeta^z_{t_{n}})\,\mathbb{P}_{\tau}(dr)\,\mathbb{I}_{\{ t_{n}<\tau\} }	. \label{eqlawofzeta_ugivenzeta_t_n}
%	\end{align}
	\begin{align}
	\mathbb{E}[g(\zeta^z_u)|\zeta^z_{t_n}]=& \,g(z) \left( \mathbb{I}_{\{\tau \leq t_{n}\} }	 	+ \dint_{(t_{n},u]}\,\Upsilon_{n}(r,\zeta^z_{t_{n}})\mathbb{P}_{\tau}(dr)\mathbb{I}_{\{ t_{n}<\tau\} } \right)\nonumber  \\
			\nonumber \\ +&\dint_{(u,+\infty)}\,K_{t_{n},u}(r,\zeta^z_{t_{n}})\Upsilon_{n}(r,\zeta^z_{t_{n}})\,\mathbb{P}_{\tau}(dr)\,\mathbb{I}_{\{ t_{n}<\tau\} }.
	\end{align}
It is clear from \eqref{condu/t} that \eqref{eqmarkovlimitzeta} holds true if the
		following two identities are satisfied $\mathbb{P}$-a.s. on $\{t<\tau\}$:
	\begin{equation}
	\lim\limits_{n \longrightarrow +\infty}\dint_{(t_{n},u]}\,\Upsilon_{n}(r,\zeta^z_{t_{n}})\mathbb{P}_{\tau}(dr)=\dint_{(t,u]}\,\dfrac{\phi(r,t,\zeta_{t}^{z})}{\dint_{(t,+\infty)}\phi(r,t,\zeta_{t}^{z})\mathbb{P}_{\tau}(dr)}\mathbb{P}_{\tau}(dr),\label{equationlimitphitnxi-phitxi}
	\end{equation}
	and

\begin{eqnarray}\label{secondterm}
\lim\limits _{n\longrightarrow+\infty}\dint_{(u,+\infty)}\,K_{t_{n},u}(r,\zeta_{t_{n}}^{z})\Upsilon_{n}(r,\zeta_{t_{n}}^{z})\,\mathbb{P}_{\tau}(dr)= \nonumber\\
\nonumber\\
\dint_{\mathbb{R}}g(y)\,f_{u-t}(y-\zeta_{t}^{z})\,\dint_{(u,+\infty)}\,\dfrac{\phi(r,u,y)}{\dint_{(t,+\infty)}\phi(r,t,\zeta_{t}^{z})\mathbb{P}_{\tau}(dr)}\,&\mathbb{P}_{\tau}(dr)\,dy.
\end{eqnarray}

%
%	\begin{align}
%	\lim\limits_{n \longrightarrow +\infty}\dint_{(u,+\infty)}\,K_{t_{n},u}(r,\zeta^z_{t_{n}})\Upsilon_{n}(r,\zeta^z_{t_{n}})\,\mathbb{P}_{\tau}(dr) \nonumber \\=\dint_{\mathbb{R}}g(y)\,f_{u-t}(y-\zeta_{t}^{z})\,\int_{(u,+\infty)}\,\dfrac{\phi(r,u,y)}{\dint_{(t,+\infty)}\phi(r,t,\zeta_{t}^{z})\mathbb{P}_{\tau}(dr)}\,\mathbb{P}_{\tau}(dr)\,dy. \label{equationlimitphitnxiG-phitxiG}
%	\end{align}
		We start by proving	 assertion \eqref{equationlimitphitnxi-phitxi}. From \eqref{equationphi} the left-hand side of \eqref{equationlimitphitnxi-phitxi} can be rewritten as

\[
\dint_{(t_{n},u]}\Upsilon_{n}(r,\zeta^z_{t_{n}})\mathbb{P}_{\tau}(dr)=\frac{\dint_{(t_{n},u]}\phi(r,t_{n},\zeta^z_{t_{n}})\,\mathbb{P}_{\tau}(dr)}{\dint_{(t_{n},+\infty)}\phi(r,t_{n},\zeta^z_{t_{n}})\,\mathbb{P}_{\tau}(ds)}=\dfrac{\dint_{(t_{n},u]}\dfrac{f_{r-t_{n}}(z-\zeta^z_{t_{n}})}{f_{r}(z)}\,\mathbb{P}_{\tau}(dr)}{\dint_{(t_{n},+\infty)}\dfrac{f_{r-t_{n}}(z-\zeta^z_{t_{n}})}{f_{r}(z)}\,\mathbb{P}_{\tau}(dr)}.
\]		
		
		Keeping in mind that hypothesis \eqref{hyintegr} leads to the joint continuity of the function $(t,x)\longrightarrow f_t(x)$ on $(0, \infty)\times \mathbb{R}$, we conclude that the function 
		\[
		(t,r,x)\longrightarrow\dfrac{f_{r-t}(z-x)}{f_r(z)}\mathbb{I}_{\left\{ t<r\right\} }
		\]
		defined on $(0,+\infty)\times[0,+\infty)\times\mathbb{R}\backslash\{z\}$
		is continuous. As the paths of $\zeta^z$ are càdlàg, we further have $\mathbb{P}$-a.s on $\left\{ t<\tau\right\} $
		\begin{equation}
		\underset{n\rightarrow+\infty}{\lim}\,\dfrac{f_{r-t_{n}}(z-\zeta^z_{t_{n}})}{f_{r}(z)}\mathbb{I}_{\left\{ t_{n}<r\right\} }=\dfrac{f_{r-t}(z-\zeta^z_{t})}{f_{r}(z)}\mathbb{I}_{\left\{ t<r\right\} }.\label{phixiconv}
		\end{equation}
		On the other hand assertion (ii) of Corollary \ref{corschilling} implies that
		\begin{equation}\label{convinfinie}
		\lim\limits_{r\longrightarrow \infty}\dfrac{f_{r-t}(z-x)}{f_r(z)}=1,
		\end{equation}
	locally uniformly in $x$. Then from assumption \eqref{behavioursmall}, the joint continuity of $f_t(x)$ and \eqref{convinfinie} we get
		\[
		\underset{(t,x)\in\mathcal{K},r>0}{\sup}\,\dfrac{f_{r-t}(z-x)}{f_r(z)}\mathbb{I}_{\left\{ t<r\right\} }<+\infty,
		\]
for any compact subset $\mathcal{K}$ of $(0,+\infty)\times\mathbb{R}\backslash\{z\}$. It results, $\mathbb{P}$-a.s on $\left\{ t<\tau\right\} $, that
		\begin{equation}
		\underset{n\in\mathbb{N},r>t}{\sup}\,\dfrac{f_{r-t_n}(z-\zeta^z_{t_n})}{f_r(z)}\mathbb{I}_{\left\{ t_n<r\right\} }<+\infty.\label{varphiconvergence}
		\end{equation}
		We conclude assertion \eqref{equationlimitphitnxi-phitxi} from the Lebesgue dominated convergence theorem.\\
		Now let us prove \eqref{secondterm}. Let us first recall that 
		$$K_{t_{n},u}(r,\zeta^z_{t_{n}})=\dint_{\mathbb{R}} g(y)\dfrac{f_{u-t_n}(y-\zeta^z_{t_n})f_{r-u}(z-y)}{f_{r-t_n}(z-\zeta^z_{t_n})}dy.$$
		Since $g$ is bounded we deduce that $K_{t_{n},u}(r,\zeta_{t_{n}})$ is bounded. Moreover, as in Remark \ref{ratio}, the function $$ y\longrightarrow \dfrac{f_{u-t_{n}}(y-\zeta^z_{t_n})f_{r-u}(z-y)}{f_{r-t_{n}}(z-\zeta^z_{t_n})} $$
	is a well defined probability density function for almost every value of $\zeta^z_{t_n}$. This provides a sequence of density functions that converge, $\mathbb{P}$-a.s on $\left\{ t<\tau\right\} $, to the density function 
	$$ y\longrightarrow \dfrac{f_{u-t}(y-\zeta^z_{t})f_{r-u}(z-y)}{f_{r-t}(z-\zeta^z_{t})}. $$
	It follows from Scheffé's lemma that
		\[
		\underset{n\rightarrow+\infty}{\lim}\, K_{t_{n},u}(r,\zeta^z_{t_{n}})=K_{t,u}(r,\zeta^z_{t}),
		\] 
		$\mathbb{P}$-a.s on $\left\{ t<\tau\right\} $.
	It is clear from \eqref{phixiconv} and \eqref{varphiconvergence} that $\Upsilon_{n}(r,\zeta_{t_{n}}^{z})$ is bounded and
\begin{equation}
\lim\limits _{n\longrightarrow+\infty}\,\Upsilon_{n}(r,\zeta_{t_{n}}^{z})=\dfrac{\phi(r,t,\zeta_{t}^{z})}{\dint_{(t,+\infty)}\phi(r,t,\zeta_{t}^{z})\mathbb{P}_{\tau}(dr)}.
\end{equation}	
It remains to apply the dominated convergence theorem to deduce \eqref{secondterm}. Thus assertion (i) is proved.
		
	(ii) Let us assume there exists $\varepsilon>0$ such that $\mathbb{P}(\tau>\varepsilon)=1$. Let $(t_{n})_{n\in \mathbb{N}}$
	be a decreasing sequence converging to $0$. Without loss of generality we assume
		$t_{n}< \dfrac{\varepsilon}{2} $ for all $n\in \mathbb{N}$. Then it is sufficient to verify that
		\begin{equation}\label{equationlimitmarkovstep2t=0zeta}
		\lim\limits_{n \rightarrow+\infty} \mathbb{E}[g(\zeta^z_{u})\vert \zeta^z_{t_{n}}]=\mathbb{E}[g(\zeta^z_{u})\vert \zeta^z_{0}],~~\mathbb{P}\text{-a.s.}
		\end{equation}
		Let us recall that 
		\begin{align*}
		\mathbb{E}[g(\zeta^z_{u})\vert\zeta^z_{0}]=\mathbb{E}[g(\zeta^z_{u})]=g(z)F(u)+\int_{(u,+\infty)}\int_{\mathbb{R}}g(y)\dfrac{f_{r-u}(z-y)f_u(y)}{f_r(z)}\,dy\,\mathbb{P}_{\tau}(dr).
		\end{align*}
Now under the above considerations we have  
		\begin{equation*}
	\mathbb{E}[g(\zeta^z_u)|\zeta^z_{t_n}]=\,g(z) \dint_{(\varepsilon,u]}\,\Upsilon_{n}(r,\zeta^z_{t_{n}})\mathbb{P}_{\tau}(dr)+\dint_{(u,+\infty)}\,K_{t_{n},u}(r,\zeta^z_{t_{n}})\Upsilon_{n}(r,\zeta^z_{t_{n}})\,\mathbb{P}_{\tau}(dr).
	\end{equation*}
	 So, in order to show \eqref{equationlimitmarkovstep2t=0zeta} we have to prove that $\mathbb{P}$-a.s, 
		\begin{equation}
		\lim\limits _{n\longrightarrow+\infty}\,\dint_{(\varepsilon,u]}\,\Upsilon_{n}(r,\zeta^z_{t_{n}})\mathbb{P}_{\tau}(dr)=F(u),
		\label{conas1}
		\end{equation}
		and 
		\begin{equation}
		\lim\limits _{n\longrightarrow+\infty}\,\dint_{(u,+\infty)}\,K_{t_{n},u}(r,\zeta^z_{t_{n}})\Upsilon_{n}(r,\zeta^z_{t_{n}})\,\mathbb{P}_{\tau}(dr)=\int_{(u,+\infty)}\int_{\mathbb{R}}g(y)\dfrac{f_{r-u}(z-y)f_u(y)}{f_r(z)}\,dy\,\mathbb{P}_{\tau}(dr).	\label{conas2}
		\end{equation}
		Since the function $(t,r,x)\longrightarrow\dfrac{f_{r-t}(z-x)}{f_r(z)}$ is continuous on $[0, \dfrac{\varepsilon}{2}]\times[\varepsilon, \infty[\times \mathbb{R}$ and, locally uniformly in $x$, 
		$\lim\limits_{r\longrightarrow \infty}\dfrac{f_{r-t}(z-x)}{f_r(z)}=1,$
	then we have 
		\[
\begin{array}{c}
\underset{n\rightarrow+\infty}{\lim}\,\dfrac{f_{r-t_{n}}(z-\zeta^z_{t_{n}})}{f_{r}(z)}=1, \,\,\mathbb{P}\text{-a.s} \\
\\
\underset{n\in\mathbb{N},r>\varepsilon}{\sup}\,\dfrac{f_{r-t_{n}}(z-\zeta^z_{t_{n}})}{f_{r}(z)}<+\infty.
\end{array}
\]
		We conclude assertions \eqref{conas1} and \eqref{conas2} from the Lebesgue dominated convergence theorem.
\end{proof}
We now come to the main result of this section which is the Markov property of $\zeta^z$ with respect to $\mathbb{F}^{\zeta^z,c}_+$ and its consequence, namely the right-continuity of the completed natural filtration of $\zeta^z$.
\begin{theorem}\label{thmrightcontinuityfiltration} 
Under assumption \eqref{behavioursmall} we have

(i) The  process $\zeta^z$ is a Markov process with respect to $\mathbb{F}^{\zeta^z,c}_{+}$.

	(ii) The  filtration $\mathbb{F}^{\zeta^z,c}$ satisfies the usual conditions of right-continuity and completeness. In other words both filtrations $\mathbb{F}^{\zeta^z,c} $ and $\mathbb{F}^{\zeta^z,c}_{+} $ are identical. 
\end{theorem}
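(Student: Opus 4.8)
The plan is to reduce assertion (i) to the single-time Markov property with respect to the right-limit filtration, namely that for every bounded continuous $g$ and all $0\le t<u$,
\[
\mathbb{E}[g(\zeta^z_u)\mid \mathcal{F}^{\zeta^z,c}_{t^+}]=\mathbb{E}[g(\zeta^z_u)\mid \zeta^z_t]\quad \mathbb{P}\text{-a.s.},
\]
which, once extended from bounded continuous to all bounded measurable $g$ by a monotone class argument, is exactly the Markov property relative to $\mathbb{F}^{\zeta^z,c}_{+}$. The engine couples three ingredients already at hand: the Markov property of $\zeta^z$ with respect to the coarser completed filtration $\mathbb{F}^{\zeta^z,c}$ (Theorem \ref{thmxitaumarkov} together with \eqref{condu/t}), the reverse martingale convergence theorem, and the right-continuity of the map $s\mapsto \mathbb{E}[g(\zeta^z_u)\mid\zeta^z_s]$ furnished by Lemma \ref{lemmatransitionfunctionrigthcontinuous}.

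To carry this out, fix $t\in[0,u)$ and a sequence $s_n\downarrow t$ with $s_n<u$. Since $\mathcal{F}^{\zeta^z,c}_{t^+}=\bigcap_n \mathcal{F}^{\zeta^z,c}_{s_n}$ and the $\mathcal{F}^{\zeta^z,c}_{s_n}$ decrease to this intersection, L\'evy's downward theorem gives $\mathbb{E}[g(\zeta^z_u)\mid\mathcal{F}^{\zeta^z,c}_{s_n}]\to \mathbb{E}[g(\zeta^z_u)\mid\mathcal{F}^{\zeta^z,c}_{t^+}]$ both $\mathbb{P}$-a.s. and in $L^1$. On the other hand, by the Markov property with respect to $\mathbb{F}^{\zeta^z,c}$ each term on the left equals $\mathbb{E}[g(\zeta^z_u)\mid\zeta^z_{s_n}]$, and Lemma \ref{lemmatransitionfunctionrigthcontinuous}(i) forces this sequence to converge $\mathbb{P}$-a.s. to $\mathbb{E}[g(\zeta^z_u)\mid\zeta^z_t]$ for $t>0$; the boundary case $t=0$ is handled the same way through Lemma \ref{lemmatransitionfunctionrigthcontinuous}(ii), which is the single point where the extra requirement that $\tau$ be bounded away from the origin must be invoked. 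Uniqueness of almost-sure limits then identifies the two sides and proves the displayed identity.

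For assertion (ii) it suffices to show $\mathcal{F}^{\zeta^z,c}_{t^+}=\mathcal{F}^{\zeta^z,c}_t$ for every $t\ge0$, the inclusion $\supseteq$ being immediate. Given a bounded random variable $Z$ measurable for $\sigma(\zeta^z_s,\,s\ge0)\vee\mathcal{N}_P$, a functional monotone class argument reduces the task to products $Z=Z_1Z_2$ with $Z_1$ being $\mathcal{F}^{\zeta^z,c}_t$-measurable and $Z_2=\prod_j g_j(\zeta^z_{u_j})$ a bounded continuous function of future coordinates $u_j>t$. Pulling the $\mathcal{F}^{\zeta^z,c}_{t^+}$-measurable factor $Z_1$ out of the conditional expectation and applying the Markov property from (i), iterated over the $u_j$ by the tower property, yields $\mathbb{E}[Z\mid\mathcal{F}^{\zeta^z,c}_{t^+}]=Z_1\,\mathbb{E}[Z_2\mid\zeta^z_t]$, which is $\mathcal{F}^{\zeta^z,c}_t$-measurable. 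Choosing $Z$ itself to be $\mathcal{F}^{\zeta^z,c}_{t^+}$-measurable then shows $Z=\mathbb{E}[Z\mid\mathcal{F}^{\zeta^z,c}_{t^+}]$ is $\mathcal{F}^{\zeta^z,c}_t$-measurable, so $\mathcal{F}^{\zeta^z,c}_{t^+}\subseteq\mathcal{F}^{\zeta^z,c}_t$. As $\mathbb{F}^{\zeta^z,c}$ is complete by construction and now right-continuous, it satisfies the usual conditions and coincides with $\mathbb{F}^{\zeta^z,c}_{+}$.

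The analytic difficulties are already absorbed into Lemma \ref{lemmatransitionfunctionrigthcontinuous}, so the anticipated main obstacle is bookkeeping rather than estimation: one must extend the one-coordinate identity of the first step to the product $Z_2$ over several future times while keeping the tower and chain applications consistent, and must ensure that the two almost-sure limits, one from the reverse martingale theorem and one from the right-continuity of the transition kernel, are taken along a common full-measure set before being equated, so that the resulting equality genuinely holds $\mathbb{P}$-a.s. and is stable under completion.
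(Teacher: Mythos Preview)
Your argument for (i) when $t>0$ and your derivation of (ii) from (i) are both fine and match the paper's strategy (reverse martingale convergence plus Lemma~\ref{lemmatransitionfunctionrigthcontinuous}(i), then the standard monotone-class deduction of right-continuity from the $\mathbb{F}^{\zeta^z,c}_+$-Markov property).

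The gap is at $t=0$. You write that Lemma~\ref{lemmatransitionfunctionrigthcontinuous}(ii) handles this case, ``which is the single point where the extra requirement that $\tau$ be bounded away from the origin must be invoked.'' But Theorem~\ref{thmrightcontinuityfiltration} carries no such hypothesis: it is stated under Assumption~\ref{behavioursmall} alone, with only $\mathbb{P}(\tau>0)=1$. Lemma~\ref{lemmatransitionfunctionrigthcontinuous}(ii) genuinely needs $\mathbb{P}(\tau>\varepsilon)=1$ for some $\varepsilon>0$, so you cannot apply it directly to $\zeta^z$, and as written your proof only establishes the theorem under this additional restriction on $\tau$.

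The paper closes this gap by a localization trick rather than a direct limit. It does not try to prove $\lim_n \mathbb{E}[g(\zeta^z_u)\mid\zeta^z_{t_n}]=\mathbb{E}[g(\zeta^z_u)\mid\zeta^z_0]$ for the original process. Instead it shows that $\mathcal{F}^{\zeta^z}_{0+}$ is $\mathbb{P}$-trivial, which immediately gives the Markov identity at $t=0$. To get triviality, it introduces the auxiliary random time $\tau_\varepsilon=\tau\vee\varepsilon$ (for suitable $\varepsilon$) and the associated bridge $\zeta^{\tau_\varepsilon,z}$; since $\mathbb{P}(\tau_\varepsilon>\varepsilon/2)=1$, Lemma~\ref{lemmatransitionfunctionrigthcontinuous}(ii) \emph{does} apply to this auxiliary process, making its germ $\sigma$-field trivial. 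Because $\zeta^{\tau_\varepsilon,z}$ and $\zeta^z$ agree on $\{\tau>\varepsilon\}$, every $A\in\mathcal{F}^{\zeta^z}_{0+}$ satisfies $\mathbb{P}(A\cap\{\tau>\varepsilon\})\in\{0,\mathbb{P}(\tau>\varepsilon)\}$, and letting $\varepsilon\downarrow 0$ along admissible values forces $\mathbb{P}(A)\in\{0,1\}$. This is the missing idea you need to remove the extra hypothesis on $\tau$.
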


\begin{proof}
	(i)	Let us prove that $\zeta^z$ is an $\mathbb{F}_+^{\zeta^z,c}$-Markov process, i.e.,
	\begin{equation}\label{Markovprop}
	\mathbb{E}[g(\zeta^z_{u})\vert\mathcal{F}_{t+}^{\zeta^z}]=\mathbb{E}[g(\zeta^z_{u})\vert\zeta^z_t], \mathbb{P}\text{-a.s.},
	\end{equation}
	for all $0\leq t<u$ and for every bounded measurable function $g$.
		Without loss of
		generality we may assume that the function $g$ is continuous and bounded. Let $(t_{n})_{n\in \mathbb{N}}$
		be a decreasing sequence converging to $t$. Since $g$ is bounded, $\mathcal{F}_{t+}^{\zeta^z,c}=\underset{n}{\bigcap}\mathcal{F}_{t_{n}}^{\zeta^z,c}$ and $\zeta^z$ is an $\mathbb{F}^{\zeta^z}$-Markov process then,  $\mathbb{P}$-a.s., we have
		\begin{align*}
		\mathbb{E}[g(\zeta^z_{u})\vert\mathcal{F}_{t+}^{\zeta^z}]&=\lim\limits_{n\longmapsto+\infty}\mathbb{E}[g(\zeta^z_{u})|\mathcal{F}^{\zeta^z,c}_{t_n}]\nonumber\\
		&=\lim\limits_{n\longmapsto+\infty}\mathbb{E}[g(\zeta^z_{u})\vert\zeta^z_{t_n}].
		\end{align*}
		To prove \eqref{Markovprop} we need to show, for all $t\geq 0$, that 
		\begin{equation}
		\lim\limits_{n\longmapsto+\infty}\mathbb{E}[g(\zeta^z_{u})\vert\zeta_{t_n}]=\mathbb{E}[g(\zeta^z_{u})\vert\zeta^z_{t}], \mathbb{P}\text{-a.s.}\label{eqthetransitionfuncionrigthcontinuous}
		\end{equation}
According to Lemma \ref{lemmatransitionfunctionrigthcontinuous} the only case that remains to be proved is $t=0$ with $\mathbb{P}(\tau>0)=1$. To see this it is sufficient to show that $\mathcal{F}_{0+}^{\zeta^z,c}$ is $\mathbb{P}$-trivial. This amounts to prove that $\mathcal{F}_{0+}^{\zeta^z}$ is $\mathbb{P}$-trivial, since $\mathcal{F}_{0+}^{\zeta^z,c}=\mathcal{F}_{0+}^{\zeta^z} \vee \mathcal{N}_{P}$.

To do so, first of all according to assertion (i) in Definition \ref{defzeta^z} there exists a set $\mathfrak{A}\in \mathcal{B}(0,+\infty)$ such that $\mathbb{P}_{\tau}(\mathfrak{A})=0$ and $0<f_r(z)<\infty$ for all $r\in (0,+\infty)\setminus \mathfrak{A}$. Let $\varepsilon \in (0,+\infty)\setminus \mathfrak{A}$ be fixed and consider the stopping time $\tau_{\varepsilon}=\tau \vee \varepsilon$. Then it is easy to see that $\mathbb{P}_{\tau_{\varepsilon}}(\mathfrak{A})=0$ and therefore we have $0<f_r(z)<\infty$ for $\mathbb{P}_{\tau_{\varepsilon}}$ almost every $r$. Now let $\zeta_{t}^{\tau_{\varepsilon},z}$ be the Lévy bridge from $0$ to $z$ with random length $\tau_{\varepsilon}$.
		First observe that the sets $(\tau_{\varepsilon}>\varepsilon)$ and $(\tau>\varepsilon)$ are equal and the same holds true for$( \tau_{\varepsilon}=r )$ and $( \tau=r)$ for every $r>\varepsilon$. Therefore the following equality of processes holds $$\zeta_{\cdot}^{\tau_{\varepsilon},z}\mathbb{I}_{(\tau>\varepsilon)}=\zeta^z_{\cdot}\;\mathbb{I}_{(\tau>\varepsilon)}.$$
		Then for each $A\in \mathcal{F}_{0+}^{\zeta^z}$ there exists $B\in \mathcal{F}_{0+}^{\zeta^{\tau_{\varepsilon},z}}$ such that  
		$$A\cap(\tau>\varepsilon)=B\cap (\tau>\varepsilon).$$
		As $\mathbb{P}(\tau_{\varepsilon}>\varepsilon/2)=1$ then according to Lemma \ref{lemmatransitionfunctionrigthcontinuous} the process $\zeta_{t}^{\tau_{\varepsilon},z}$ is an $\mathbb{F}_+^{\zeta_{t}^{\tau_{\varepsilon},z},c}$-Markov process. Thus $\mathcal{F}_{0+}^{\zeta^{\tau_{\varepsilon}},z}$ is $\mathbb{P}$-trivial which implies that $\mathbb{P}(B)=0$ or $1$. Consequently we obtain 
		$$\mathbb{P}(A\cap(\tau>\varepsilon))=0\text{\,\,or\,\,}\mathbb{P}(A\cap(\tau>\varepsilon))=\mathbb{P}(\tau>\varepsilon).$$
		Now if $\mathbb{P}(A)>0$, then there exists $\varepsilon \in (0,+\infty)\setminus \mathfrak{A}$ such that $\mathbb{P}(A\cap \{\tau> \varepsilon\})>0$. Therefore for all $0<\varepsilon'\leq \varepsilon$ we have 
		$$\mathbb{P}(A\cap(\tau>\varepsilon'))=\mathbb{P}(\tau>\varepsilon').$$ 
		Passing to the limit as $\varepsilon'$ goes to $ 0$ yields $\mathbb{P}(A\cap(\tau>0))=\mathbb{P}(\tau>0)=1$. It follows that $\mathbb{P}(A)=1$.

(ii) It is sufficient to prove that for every bounded
	$\mathcal{F}_{t+}^{\zeta^z,c}$-measurable $Y$ we have $\mathbb{P}$-a.s.,
	\begin{equation}
	\mathbb{E}[Y|\mathcal{F}_{t+}^{\zeta^z,c}]=\mathbb{E}[Y|\mathcal{F}_{t}^{\zeta^z,c}].\label{eqrightcontinuityfiltration}
	\end{equation}
	This is a consequence of the Markov property of $\zeta^z$ with respect to $\mathbb{F}^{\zeta^z,c}_+$. 
		\end{proof}
		\section{Examples}
		In this section we shall give some examples of Lévy bridges with random length.
		
\begin{enumerate}
\item \textbf{~Stable process}.

Symmetric $\alpha$-stable processes $X^{S}_{\alpha}$, with $\alpha \in (0,1)\cup(1,2)$, are the class of L{\'e}vy processes whose characteristic exponents 
correspond to those of symmetric $\alpha$-stable distributions. The corresponding L{\'e}vy measure is given by
\[
d\Lambda^{S}_{\alpha}(s)=\left(\frac{1}{\left|s\right|^{1+\alpha}}1\!\!1_{\{s<0\}}+
\frac{1}{s^{1+\alpha}}1\!\!1_{\{s>0\}}\right)ds.\]
The characteristic exponent $\varPsi_{\Lambda^{S}_{\alpha}}$ has the form
\[
\varPsi_{\Lambda^{S}_{\alpha}}(u)=-|u|^{\alpha},~ u\in\mathbb{R}.
\]

\item \textbf{~Tempered stable process}. 

Tempered stable processes belong to the class proposed by Rosinski (2007). They are Lévy processes with no Gaussian component and Lévy density of the form:

\[
d\Lambda^{TS}_{\alpha}(s)=\left(\frac{c_{-}e^{-\lambda_{-}\left|s\right|}}{\left|s\right|^{1+\alpha}}1\!\!1_{\{s<0\}}+\frac{c_{+}e^{-\lambda_{+} s}}{s^{1+\alpha}}1\!\!1_{\{s>0\}}\right)ds\]
with parameters satisfy $c_{-} >0,\, c_{+} >0,\, \lambda_{-} >0,\, \lambda_{+} >0$,  and $0< \alpha <2$.
In particular, the class considered if $c_{+} = c_{-}=c$ and $\lambda_{+} = \lambda_{-}=\lambda$. The characteristic exponent $\varPsi_{\Lambda^{TS}_{\alpha}}$ is given by
\[
\varPsi_{\Lambda^{TS}_{\alpha}}(u)=\Gamma(-\alpha)\,c\, \lambda^{\alpha}\left[ \left(1-i\dfrac{u}{\lambda}\right)^{\alpha}+\left(1+i\dfrac{u}{\lambda}\right)^{\alpha}-2\right],~ u\in\mathbb{R}.
\]

For the stable and tempered stable processes the conditions \eqref{hyintegr} and \eqref{behavioursmall} are satisfied, see Remark 6.4 and Example 6.5 in \cite{FH}.

\item \textbf{~Modified tempered stable process}. 

The modified tempered stable process is a L{\'e}vy process 
with the L{\'e}vy density given by 
\[
d\Lambda^{MTS}_{\alpha}(s)=\frac{1}{\pi}\left(\frac{K_{\alpha+\frac{1}{2}}(\left|s\right|)}{\left|s\right|^{\alpha+\frac{1}{2}}}1\!\!1_{\{s<0\}}+\frac{K_{\alpha+\frac{1}{2}}(s)}{s^{\alpha+\frac{1}{2}}}1\!\!1_{\{s>0\}}\right)ds.
\]
 Here $K_{\alpha+\frac{1}{2}}$ is the modified Bessel function of the second kind given by the following integral representation
\begin{equation*}
K_{\alpha+\frac{1}{2}}(s)=
\frac{1}{2}\left(\frac{s}{2}\right)^{\alpha+\frac{1}{2}}\int_{0}^{+\infty}
\exp\left( -t- \frac{s^{2}}{4t}\right) t^{-\alpha-\frac{3}{2}}\, dt.\label{Aout26eq2} 
\end{equation*}
The characteristic exponent has the form 
\[
\varPsi_{\Lambda^{MTS}_{\alpha}}(u)=\frac{1}{\sqrt{\pi}}\,2^{-\alpha-\frac{1}{2}}\,\Gamma(-\alpha)[(1+u^{2})^{\alpha}-1], ~u\in\mathbb{R}.
\]
For additional details the reader may consult \cite{KRCB}. 

\item\textbf{~Normal inverse Gaussian process}. 

The Normal inverse Gaussian process is a L{\'e}vy process with L{\'e}vy measure given by
\[
 d\Lambda^{NIG}(s)=\frac{K_{1}(|s|)}{\pi |s|}ds,
\]
where $K_{1}$ is the modified Bessel function of the second
kind with index $1$. The characteristic exponent is equal to 
\[
\varPsi_{\Lambda^{\mathrm{NIG}}}(u)= \left(1-\sqrt{1+u^{2}}\right), u\in \mathbb{R}.
\]
For further results related to the Normal inverse Gaussian process see
\cite{Br2}  and \cite{Ryd1}.
\end{enumerate}

One can easily check that the conditions \eqref{hyintegr} and \eqref{behavioursmall} are satisfied for both previous processes.

\vspace{0,2cm}

\textbf{Acknowldgements:} 		The authors would like to express particular thanks to Hans-J\"urgen Engelbert for comments that greatly improved the manuscript and René Schilling for clarification of a result in his paper.
The third author gratefully acknowledge financial support by an Erasmus+ International Credit Mobility exchange project coordinated by Linnaeus University. This article has been finalizedduring a staf mobility of the second author at Cadi Ayyad University within this project.

\end{document}